\pdfoutput=1
\documentclass[10pt,a4paper,reqno,oneside]{article}
\usepackage{calligra}
\usepackage{amsmath,amsthm,amssymb,mathrsfs,mathtools,bm,eucal,tensor,amscd}
\usepackage{microtype}
\usepackage[scaled]{beramono,berasans}
\usepackage{enumerate,comment,braket,xspace,tikz-cd}
\usepackage[all,cmtip]{xy}
\usepackage[utf8]{inputenc}
\usepackage[T1]{fontenc}
\usepackage{tikz}

\definecolor{linkcolor}{HTML}{005050}
\usepackage[centering,vscale=0.7,hscale=0.7]{geometry}
\usepackage[hidelinks]{hyperref}
\usepackage[capitalize]{cleveref}
\usepackage{graphicx}
\usepackage{xparse}
\usepackage{yfonts}
\usepackage{xfrac}
\usepackage{faktor}
\usepackage{xy}
\usepackage[
style=alphabetic
]{biblatex}
\usepackage{vmargin}
\setpapersize{A4}
\setmarginsrb{30mm}{10mm}{30mm}{10mm}%
{12mm}{10mm}{5mm}{10mm}

\theoremstyle{plain}
\newtheorem{thm-intro}{Theorem}
\newtheorem{thm}{Theorem}
\newtheorem{lem}[thm]{Lemma}
\newtheorem{prop}[thm]{Proposition}

\newtheorem{cor}[thm]{Corollary}

\theoremstyle{definition}

\newtheorem{eg}[thm]{Example}
\theoremstyle{remark}
\newtheorem*{rem*}{Remark}
\newtheorem{rem}[thm]{Remark}
\numberwithin{thm}{section}

\newcommand{\Aut}{\mathrm{Aut}}
\newcommand{\Hom}{\mathrm{Hom}}
\newcommand{\End}{\mathrm{End}}
\newcommand{\Gal}{\mathrm{Gal}}
\newcommand{\tsigma}{\tilde{\sigma}}
\newcommand{\ttau}{\tilde{\tau}}
\newcommand{\tlx}{T_{\ell}X}
\newcommand{\vlx}{V_{\ell}X}
\newcommand{\tlsigmax}{T_{\ell}{}^{\sigma}X}
\newcommand{\vlsigmax}{V_{\ell}{}^{\sigma}X}
\newcommand{\sigmax}{{}^{\sigma}X}

\title{Descent properties for an abelian variety\\ with extended Galois representation}
\author{Ludovic Felder}
\date{}

\begin{document}

\maketitle

\begin{abstract}
    Let $K$ be a field, $L$ a finite Galois extension of $K$, and $X$ an abelian variety defined over $L$. If $X$ is isogenous over $L$ to an abelian variety defined over $K$, then the $\ell$-adic Galois representations associated to $X$ extend to representations $\bar{\rho}_{\ell,X}:\Gal(\bar{L}/K)\to\Aut(V_\ell X)$ for every prime $\ell$. This paper aims to show that the converse is true for abelian varieties of Type I, with some supplementary conditions needed on the endomorphisms of $X$, when $L$ is either a number field or a function field of prime characteristic different from $2$.
\end{abstract}

\section*{Introduction}

Let $K$ be a field, let $L$ be a finite Galois extension of $K$. We fix a separable closure $\bar{K}=\bar{L}$ for the rest of the article (the choice of such a separable closure does not matter). We consider an abelian variety $X$ of dimension $g$ defined over $L$, and we will assume that all endomorphisms of $X_{\bar{L}}$ are defined over $L$. If there exists an $L/K$-descent datum for $X$, that is if $X$ descends on the subfield $K$ of $L$, then the natural action of the group $\Gal(\bar{L}/L)$ on the $\ell$-adic Tate module $\tlx$ extends to an action of the group $\Gal(\bar{L}/K)$ for every prime $\ell$. We would like to know if the converse is true: we will study the case where, for a rational prime $\ell$, the $\ell$-adic Galois representation associated to $X$ extends to the group $\Gal(\bar{L}/K)$. In the first section we will mainly recall some definitions about Galois representations associated to an abelian variety and the Galois conjugates $\sigmax$ of $X$ for $\sigma$ in $\Gal(L/K)$, as well as the most important notations. In the second section we will assume that $L$ is either a finite field, a function field of positive characteristic different from $2$ or a number field and that $X$ is a simple abelian variety. We will show that, if the Galois representation can be extended to a representation $\bar{\rho}_{\ell,X}$ defined on the group $\Gal(\bar{L}/K)$ for a rational prime $\ell$, then it is a \emph{$K$-abelian variety}, that means an abelian variety defined over $L$ that is isogenous to each of its $\Gal(L/K)$-conjugates. In the third part we will show that a result of \cite{ribet_1994} concerning the field of definition of a $K$-abelian variety holds in greater generality than originally stated and derive some consequences for our case of study. Finally we will show that, if the representation $\rho_{\ell,X}$ extends to the group $\Gal(\bar{L}/K)$ for almost all primes $\ell$ and under certain conditions on the endomorphisms of $X$, then $X$ is isogenous to an abelian variety defined over $K$.

\section{Definitions}\label{definition_section}

\subsection{The Galois representations associated to an abelian variety} \label{Gal_rep_section}

For every rational prime $\ell$ different from $\mathrm{char}(L)$, for every integer $n\geq 0$, denote by $X[\ell^n]$ the kernel of the multiplication map $[\ell^n]_X:X\to X$. It is known that the group $X[\ell^n](\bar{L)}$ of $\bar{L}$-points of $\ell^n$-torsion is isomorphic to $(\mathbb{Z}/\ell^n\mathbb{Z})^{2g}$, and that the groups $\{X[\ell^n](\bar{L})\}_{n\geq 0}$ form a projective system where the transition maps are given by the multiplication $[\ell]_X:X[\ell^{n+1}](\bar{L})\to X[\ell^n](\bar{L})$. By taking the projective limit, we obtain the Tate module
\begin{align} T_{\ell}X=\underleftarrow{\mathrm{lim}}(X[\ell^n](\bar{L})).\end{align}It is non-canonically isomorphic to $\mathbb{Z}_\ell^{2g}$. The group $\Gal(\bar{L}/L)$ acts on the left on $X[\ell^n](\bar{L})$ for every $n\geq 0$ by sending a point $Q\in X[\ell^n](\bar{L})$ to the point $Q\circ\mathrm{Spec}(\Tilde{\sigma})\in X[\ell^n](\bar{L})$, for $\Tilde{\sigma}\in\Gal(\bar{L}/L)
$. For every natural integer $n$, this defines a map $\rho_{X,n}:\Gal(\bar{L}/L)\to\Aut(X[\ell^n](\bar{L}))$ that makes this diagram commutative:
\begin{equation}\begin{tikzcd}
{X[\ell^n]} \arrow[r, no head, equal]                                      & {X[\ell^n]}                                                       \\
\mathrm{Spec}(\bar{L}) \arrow[r, "\mathrm{Spec}(\tsigma)^{-1}"'] \arrow[u, "Q"] & \mathrm{Spec}(\bar{L}) \arrow[u, "\rho_{X,n}(\tsigma)(Q)"']
\end{tikzcd}\end{equation}
The maps $\rho_{X,n}$ commute with the multiplication map $[\ell]_X$, in the sense that for every positive integer $n$, we have $[\ell]_X\circ\rho_{X,n+1}=\rho_{X,n}\circ [\ell]_X$. As a consequence the Tate module is endowed with a continuous Galois representation \begin{align}\rho_{\ell,X}:\Gal(\bar{L}/L)\to \Aut(T_{\ell}X)\end{align}obtained by considering compatible systems of points of $\{X[\ell^n](\bar{L})\}_{n\in\mathbb{N}}$  (by this we mean a set of points $\{Q_n\in X[\ell^n](\bar{L})\}_{n\in\mathbb{N}}$ such that $[l]_X (Q_{n+1})=Q_n$ for every $n$) and putting together the $\rho_{X,n}$'s, for $n\geq 0$.
\newline Moreover, if $g:Y\to X$ is a morphism of abelian varieties over $L$, then $g$ induces a homomorphism of Tate modules $T_\ell g:T_\ell Y\to T_\ell X$ by sending the element $(0,Q_1,Q_2,...)$ of $T_\ell Y$ to the element $(0,g(Q_1),g(Q_2),...)$ of $T_\ell X$, this map is $\Gal(\bar{L}/L)$-equivariant and $\mathbb{Z}_\ell$-linear. The map $g\mapsto T_\ell g$ is injective, so there is an inclusion of $\Hom(Y,X)$ in $\Hom(T_\ell Y,T_\ell X)$.
\newline For two abelian varieties $X$ and $Y$ over $L$, we call $\End^0(X):=\End(X)\otimes\mathbb{Q}$ and $\Hom^0(Y,X):=\Hom(Y,X)\otimes\mathbb{Q}$ the \emph{(endo)morphisms of abelian varieties up to isogeny}. We obtain by tensoring the $\ell$-adic Tate module with $\mathbb{Q}_\ell$ a vector space $\vlx:=\tlx\otimes\mathbb{Q}_\ell$ of dimension $2g$ over $\mathbb{Q}_\ell$. The preceding constructions induce a Galois representation that we are going to denote by the same letter $\rho_{\ell,X}:\Gal(\bar{L}/L)\to\Aut(\vlx)$, and for $g\in\Hom^0(Y,X)$, a $\Gal(\bar{L}/L)$-equivariant and $\mathbb{Q}_\ell$-linear morphism $V_\ell g:V_\ell Y\to V_\ell X$ that gives once again an inclusion of $\Hom^0(Y,X)$ in $\Hom(V_\ell Y,V_\ell X)$. For $\tsigma\in\Gal(\bar{L}/L)$, we will see $\rho_{\ell,X}(\tsigma)$ as an automorphism of $\tlx$ or $\vlx$ depending on the context.

\subsection{Galois conjugates of abelian varieties} \label{twist_section}

Let $\sigma\in\Gal(L/K)$. We define the Galois conjugate ${}^{\sigma}X$ as the fibered product

\begin{equation}\label{commdiag} \begin{CD}
    {}^{\sigma}X @>>> X\\
    @VVV @VV f V \\
    \mathrm{Spec}(L) @>>\mathrm{Spec}(\sigma)> \mathrm{Spec}(L),
    \end{CD} \end{equation}

\medskip
\noindent with $f$ being the structure morphism of $X$. Concretely, if $X$ is defined by a set of polynomials $\{g_1,...,g_n\}\subset L[X_1,...,X_m]$, then ${}^{\sigma}X$ is the abelian variety defined by the set of polynomials $\{\sigma(g_1),...,\sigma(g_n)\}\subset L[X_1,...,X_m]$. The top arrow is an isomorphism of schemes, that we will denote by $\sigma_X$. In fact the varieties $X$ and ${}^{\sigma}X$ can be seen as the same scheme, with different structure morphisms (namely, in this diagram the structure morphism for $\sigmax$ is simply $\mathrm{Spec}(\sigma)^{-1}\circ f$ when identifying $\sigmax$ and $X$ via $\sigma_X$). This can be seen in the following diagram, by showing that the outer square enjoys the universal property of the fibered product:

\begin{equation}\label{identification} \begin{tikzcd}
X \arrow[d, "f"'] \arrow[r, no head, equal]                & X \arrow[dd, "f"]      \\
\mathrm{Spec}(L) \arrow[d, "\mathrm{Spec}(\sigma)^{-1}"'] &                        \\
\mathrm{Spec}(L) \arrow[r, "\mathrm{Spec}(\sigma)"'] & \mathrm{Spec}(L).
\end{tikzcd} \end{equation}

\medskip
\noindent If $\tau\in\Gal(L/K)$, the equality $\mathrm{Spec}(\sigma\tau)=\mathrm{Spec}(\tau)\circ\mathrm{Spec}(\sigma)$ shows that ${}^{\sigma}({}^{\tau}X)={}^{\sigma\tau}X$ by noticing that the outer squares of the following diagrams are the same, because the inner squares are cartesian:

\medskip
\begin{center} $\begin{CD}
    {}^{\sigma}({}^{\tau}X) @>>> {}^{\tau}X @>>> X\\
    @VVV @VVV @VV f V \\
    \mathrm{Spec}(L) @>>\mathrm{Spec}(\sigma)> \mathrm{Spec}(L) @>>\mathrm{Spec}(\tau)> \mathrm{Spec}(L)
    \end{CD}$ 
    \hspace{5mm} and \hspace{5mm}
    $\begin{CD}
    {}^{\sigma\tau}X @>>> X\\
    @VVV @VV f V \\
    \mathrm{Spec}(L) @>>\mathrm{Spec}(\sigma\tau)> \mathrm{Spec}(L).
    \end{CD}$
    \end{center}

\medskip
\noindent In the same fashion, we obtain from a morphism of abelian varieties $g:Y\to X$ over $L$, a "conjugated" morphism ${}^{\sigma}g:{}^{\sigma}Y\to{}^{\sigma}X$ over $L$ given by the universal property, and explicitly defined as \begin{align}{}^{\sigma}g=\sigma^{-1}_X\circ g\circ \sigma_Y.\end{align}We check easily that ${}^{\sigma}g$ is still a morphism of abelian varieties over $L$ and that ${}^{\sigma}({}^{\tau}g)={}^{\sigma\tau}g$. The maps ${}^{\sigma}g$ and $g$ are the same when we identify $\sigmax$ and $X$ via $\sigma_X$ (resp. ${}^{\sigma}Y$ and $Y$ via $\sigma_Y$), so the free $\mathbb{Z}$-modules $\Hom(Y,X)$ and $\Hom({}^{\sigma}Y,\sigmax)$ are identified by the map sending $g$ to ${}^{\sigma}g$. In particular the rings $\End(X)$ and $\End(\sigmax)$ are the same and we can identify an element $f$ of $\End(X)$ with its Galois conjugate ${}^{\sigma}f$ in $\End(\sigmax)$. As ${}^{\sigma}g$ is a morphism of abelian varieties over $L$, this construction induces a homomorphism of Tate modules $T_{\ell}{}^{\sigma}g: T_{\ell}{}^{\sigma}Y\to\tlsigmax$. If $g\in\Hom^0(Y,X)$, we define in the same way the conjugated morphism ${}^{\sigma}g\in\Hom^0({}^{\sigma}Y,\sigmax)$ and the associated morphism of Tate modules $V_\ell {}^{\sigma}g:V_\ell {}^{\sigma}Y\to V_\ell {}^{\sigma}X$. Notice that, although we identified the endomorphisms of $X$ with the endomorphisms of $\sigmax$, we do not know yet if there exists any morphism of abelian varieties over $L$ from $X$ to $\sigmax$.

\begin{rem}
    If there exists an abelian variety $X_K$ defined over $K$ such that $X=X_K\times_K L$, then every Galois conjugate $\sigmax$ is isomorphic over $L$ to $X$. This comes from the fact that the polynomial equations defining $X$ have coefficients in $K$.
\end{rem}

\subsection{The action on the torsion points} \label{pi_section}

We can extend the map $\rho_{X,n}:\Gal(\bar{L}/L)\to\Aut(X[\ell^n](\bar{L}))$ defined in section \ref{Gal_rep_section} to a map
\begin{equation}\pi_{X,n}:\Gal(\bar{L}/K)\to \bigsqcup_{\sigma\in\Gal(L/K)}\Hom(X[\ell^n](\bar{L}),\sigmax[\ell^n](\bar{L})),\end{equation} where $\pi_{X,n}(\tsigma)\in\Hom(X[\ell^n](\bar{L}),{}^{\tsigma_{\mid L}}X[\ell^n](\bar{L}))$, for $\tsigma$ in $\Gal(\bar{L}/K)$. Explicitly, for $\Tilde{\sigma}\in\Gal(\bar{L}/K)$ and $\sigma=\tsigma_{\mid L}$ its restriction to $L$, the map $\pi_{X,n}(\Tilde{\sigma})$ sends a point $Q\in X[\ell^n](\bar{L})$ to the point $P:=\sigma_X^{-1}\circ Q\circ\mathrm{Spec}(\Tilde{\sigma})\in {}^{\sigma}X[\ell^n](\bar{L})$. Thus the following diagram commutes:
\medskip
\begin{equation} \begin{tikzcd}
    & \sigmax[\ell^n] \arrow[rr, "\sigma_X"] \arrow[dd]  &   & X[\ell^n] \arrow[dd]    \\
\mathrm{Spec}(\bar{L}) \arrow[rr, "\mathrm{Spec}(\tilde{\sigma})"] \arrow[ru, "P=\pi_{X,n}(\tsigma)(Q)"] \arrow[rd] &   & \mathrm{Spec}(\bar{L}) \arrow[ru, "Q"] \arrow[rd] &    \\
        & \mathrm{Spec}(L) \arrow[rr, "\mathrm{Spec}(\sigma)"] &                                                   & \mathrm{Spec}(L)
\end{tikzcd} \end{equation}

\medskip
\noindent We check easily that $P$ is $L$-linear because $\sigma_X$ and $\mathrm{Spec}(\tsigma)$ both are $\sigma$-semi-linear. As the maps $\sigma_X:\sigmax\to X$ and $\mathrm{Spec}(\tsigma):\mathrm{Spec}(\bar{L})\to\mathrm{Spec}(\bar{L})$ are isomorphisms of schemes, the map $\pi_{X,n}(\tsigma)$ is an isomorphism of groups for every $\tsigma\in\Gal(\bar{L}/K)$. The $\pi_{X,n}(\tsigma)$'s once again commute with the multiplication-by-$\ell$ map and we define by taking the projective limit of a compatible system of points of $\{X[\ell^n](\bar{L})\}_{n\geq 0}$, an isomorphism of $\mathbb{Z}_\ell$-modules \begin{equation}\pi_X(\tsigma):\tlx\to\tlsigmax.\end{equation}If we take $\Tilde{\sigma}\in\Gal(\bar{L}/L)$, i.e. $\Tilde{\sigma}_{\mid L}=\sigma=\mathrm{id}_L$, then $\sigma_X=\mathrm{Id}_X$ and $\pi_X(\Tilde{\sigma})=\rho_{\ell,X}(\Tilde{\sigma})$, hence justifying that $\pi_X$ is an extension of $\rho_{\ell,X}$.

\begin{rem}
From now on, we will denote by $\Tilde{\sigma}$, $\Tilde{\tau}$... the elements of the large Galois group $\Gal(\bar{L}/K)$ and by $\sigma$, $\tau$... their restrictions to $L$, i.e. elements of the small Galois group $\Gal(L/K)$. As $\Gal(\bar{L}/L)$ is a subgroup of $\Gal(\bar{L}/K)$, we will write $\Tilde{\sigma}$, $\Tilde{\tau}$... for the elements of $\Gal(\bar{L}/L)$ too.
\end{rem}

If $\Tilde{\sigma}$, $\Tilde{\tau}\in\Gal(\bar{L}/K)$, then for every positive integer $n$, for every $Q$ in $X[\ell^n](\bar{L})$ we have
\begin{align*} \pi_{X,n}(\Tilde{\sigma}\Tilde{\tau})(Q)&=(\sigma\tau)_X^{-1}\circ Q\circ\mathrm{Spec}(\Tilde{\sigma}\Tilde{\tau})\\
&=\sigma_{{}^{\tau}X}^{-1}\circ\tau_X^{-1}\circ Q\circ\mathrm{Spec}(\Tilde{\tau})\circ\mathrm{Spec}(\Tilde{\sigma})\\
&=\sigma_{{}^{\tau}X}^{-1}\circ \pi_{X,n}(\Tilde{\tau})(Q)\circ\mathrm{Spec}(\Tilde{\sigma})\\
&=\pi_{{}^\tau X,n}(\Tilde{\sigma})(\pi_{X,n}(\Tilde{\tau})(Q)) \end{align*}
and thus we have \begin{align}\label{compo}\pi_X(\Tilde{\sigma}\Tilde{\tau})=\pi_{{}^{\tau}X}(\Tilde{\sigma})\circ\pi_X(\Tilde{\tau})\end{align} on the Tate modules (notice that $\pi_X(\tsigma)^{-1}=\pi_{{}^{\sigma}X}(\tsigma^{-1})$ because $\pi_X(\mathrm{id})=\pi_X(\tsigma^{-1}\tsigma)=\mathrm{Id}_{T_\ell X}$). We can extend this map to a $\mathbb{Q}_\ell$-linear isomorphism $\pi_X(\tsigma):V_\ell X\to V_\ell\sigmax$, that enjoys the property (\ref{compo}) and that coincides with $\rho_{\ell,X}(\tsigma)\in\Aut(\vlx)$ when $\tsigma\in\Gal(\bar{L}/L)$. In what follows, the context will determine if we are seeing $\pi_X(\tsigma)$ as an element of $\Hom(\tlx,\tlsigmax)$ or $\Hom(\vlx,\vlsigmax)$. Now define, for $\phi: T_\ell Y\to\tlx$ (resp. $\phi:V_\ell Y\to\vlx$) any $\Gal(\bar{L}/L)$-equivariant homomorphism between the Tate modules of two abelian varieties $Y$ and $X$, a "conjugated" morphism ${}^{\sigma}\phi:T_\ell {}^{\sigma}Y\to\tlsigmax$ (resp. ${}^{\sigma}\phi:V_\ell {}^{\sigma}Y\to\vlsigmax$) by the formula
\begin{equation}\label{formula}
    {}^{\sigma}\phi=\pi_X(\tsigma)\circ\phi\circ\pi_Y(\tsigma)^{-1}
\end{equation} for any $\tsigma\in\Gal(\bar{L}/K)$ extending $\sigma$. This makes sense, because if we choose $\tsigma_1$, $\tsigma_2$ two elements of $\Gal(\bar{L}/K)$ extending $\sigma$ (so that the composition $\tsigma_2^{-1}\tsigma_1$ lies in the group $\Gal(\bar{L}/L)$), then
\begin{align*}
    {}^{\sigma}\phi&=\pi_X(\tsigma_1)\circ \phi\circ\pi_Y(\tsigma_1)^{-1}\\ &=\pi_X(\tsigma_2\tsigma_2^{-1}\tsigma_1)\circ \phi\circ\pi_Y(\tsigma_2\tsigma_2^{-1}\tsigma_1)^{-1}\\ &=\pi_X(\tsigma_2)\circ\pi_X(\tsigma_2^{-1}\tsigma_1)\circ \phi\circ\pi_Y(\tsigma_2^{-1}\tsigma_1)^{-1}\circ\pi_Y(\tsigma_2)^{-1}\\ &=\pi_X(\tsigma_2)\circ\rho_{\ell,X}(\tsigma_2^{-1}\tsigma_1)\circ \phi\circ \rho_{\ell,Y}(\tsigma_2^{-1}\tsigma_1)^{-1}\circ\pi_Y(\tsigma_2)^{-1}\\ &=\pi_X(\tsigma_2)\circ\phi\circ\pi_Y(\tsigma_2)^{-1}
\end{align*}
because the morphism $\phi$ commutes with the action of $\Gal(\bar{L}/L)$. It follows that ${}^{\sigma}\phi$ only depends on $\sigma:=\tsigma_{\mid L}$ as expected. The next proposition justifies this formula:

\begin{prop} \label{twist_tate_morphism} If there exists a morphism of abelian varieties $g:Y\to X$ over $L$ such that $\phi=T_\ell g$ (resp. an element $g\in\Hom^0(Y,X)$ such that $\phi=V_\ell g$), then \begin{align*} {}^{\sigma}\phi&=T_{\ell}{}^{\sigma}g\\
\text{(resp. }{}^{\sigma}\phi&=V_\ell {}^{\sigma}g \text{).}\end{align*} Thus ${}^{\sigma}T_\ell g=T_\ell {}^{\sigma}g=\pi_X(\tsigma)\circ T_\ell g\circ \pi_Y(\tsigma)^{-1}$ (resp. ${}^{\sigma}V_\ell g=V_\ell {}^{\sigma}g$) for any $\tsigma\in\Gal(\bar{L}/K)$ extending $\sigma$. \end{prop}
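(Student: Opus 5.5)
The plan is to reduce the statement to an equality at finite level $n$ and then pass to the limit. Since $T_\ell g$, $T_\ell{}^{\sigma}g$ and the maps $\pi_X(\tsigma)$, $\pi_Y(\tsigma)$ are all obtained as projective limits of their restrictions to the $\ell^n$-torsion, it suffices to prove, for every $n\geq 0$, every $\tsigma\in\Gal(\bar{L}/K)$ extending $\sigma$, and every $Q\in Y[\ell^n](\bar{L})$,
\begin{equation*}
{}^{\sigma}g\circ\pi_{Y,n}(\tsigma)(Q)=\pi_{X,n}(\tsigma)\bigl(g\circ Q\bigr),
\end{equation*}
where $g(Q)$ means the composite $g\circ Q:\mathrm{Spec}(\bar{L})\to X[\ell^n]$. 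This identity gives $T_\ell{}^{\sigma}g\circ\pi_Y(\tsigma)=\pi_X(\tsigma)\circ T_\ell g$. Because $\pi_Y(\tsigma)$ is an isomorphism, this is exactly ${}^{\sigma}\phi=T_\ell{}^{\sigma}g$ for $\phi=T_\ell g$. By the independence computation preceding the proposition, it does not matter which lift $\tsigma$ is chosen.

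The finite-level identity is a direct unwinding of definitions. By the explicit formula ${}^{\sigma}g=\sigma_X^{-1}\circ g\circ\sigma_Y$ and the definition $\pi_{Y,n}(\tsigma)(Q)=\sigma_Y^{-1}\circ Q\circ\mathrm{Spec}(\tsigma)$, the left side is
\begin{equation*}
\sigma_X^{-1}\circ g\circ\sigma_Y\circ\sigma_Y^{-1}\circ Q\circ\mathrm{Spec}(\tsigma)=\sigma_X^{-1}\circ(g\circ Q)\circ\mathrm{Spec}(\tsigma),
\end{equation*}
which is by definition $\pi_{X,n}(\tsigma)(g\circ Q)$. The only care needed is that $g$ maps $Y[\ell^n]$ into $X[\ell^n]$, which holds because $g$ is a homomorphism. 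One also needs $\sigma_X$ and $\sigma_Y$ to restrict to the $\ell^n$-torsion subschemes, which holds since ${}^{\sigma}(X[\ell^n])=({}^{\sigma}X)[\ell^n]$ by base change of the kernel. Compatibility with the transition maps $[\ell]$ is automatic, so the equality passes to $T_\ell$.

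For the rational statement, write $g=\frac{1}{m}h$ with $h\in\Hom(Y,X)$ and $m$ a positive integer. Then $V_\ell g=\frac1m (T_\ell h\otimes\mathbb{Q}_\ell)$ and ${}^{\sigma}g=\frac1m{}^{\sigma}h$, because $g\mapsto{}^{\sigma}g$ is additive, being the identification of $\Hom$-groups. Both sides of the formula are $\mathbb{Q}$-linear in $\phi$, and the $\pi$'s on $V_\ell$ are the $\mathbb{Q}_\ell$-linear extensions, so the integral case gives the rational one. The last sentence of the proposition is then a restatement of the definition of ${}^{\sigma}\phi$ in (\ref{formula}).

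I do not expect a real obstacle. The most delicate point is bookkeeping: checking that the composite $\sigma_X^{-1}\circ(g\circ Q)\circ\mathrm{Spec}(\tsigma)$ is the $L$-morphism representing the point in ${}^{\sigma}X[\ell^n](\bar{L})$. That is, the semilinearity conventions for $\sigma_X$ and $\mathrm{Spec}(\tsigma)$ must match those already used in the definition of $\pi_{X,n}$, and this follows from the commutative diagram given there.
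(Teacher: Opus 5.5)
Your proof is correct and is essentially the paper's argument. The paper also unwinds ${}^{\sigma}g=\sigma_X^{-1}\circ g\circ\sigma_Y$ against $\pi_{Y,n}(\tsigma)$ on the $\ell^n$-torsion points and then passes to the projective limit; the only difference is that it parametrizes by $P\in{}^{\sigma}Y[\ell^n](\bar{L})$ rather than by $Q=\pi_{Y,n}(\tsigma)^{-1}(P)$, and your explicit reduction of the $\Hom^0$ case to the integral one via $g=\frac{1}{m}h$ fills in a step the paper leaves implicit.
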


\begin{proof} In the following commutative diagram, we write $P\in {}^{\sigma}Y[\ell^n](\bar{L})$ and $Q=\sigma_Y\circ P\circ\mathrm{Spec}(\tsigma)^{-1}=\pi_{Y,n}(\tsigma)^{-1}(P)$. The composition of the three arrows on top of the diagram is ${}^{\sigma}g$ (restricted to the points of $\ell^n$-torsion):
\begin{center} $\begin{CD}
    {}^{\sigma}Y[\ell^n] @> \sigma_Y >> Y[\ell^n] @> g >> X[\ell^n] @> \sigma_X^{-1} >> {}^{\sigma}X[\ell^n] \\
    @A P AA @A Q AA @AA g\circ Q A @AA {}^{\sigma}g\circ P A \\
    \mathrm{Spec}(\bar{L}) @>>\mathrm{Spec}(\bar{\sigma})> \mathrm{Spec}(\bar{L}) @= \mathrm{Spec}(\bar{L}) @>>\mathrm{Spec}(\bar{\sigma})^{-1}> \mathrm{Spec}(\bar{L})
    \end{CD}$ \end{center}
so we get by diagram-chasing \begin{align*} {}^{\sigma}g\circ P &= \sigma_X^{-1}\circ g \circ Q \circ\mathrm{Spec}(\tsigma)\\ &=\sigma_X^{-1}\circ g\circ\pi_{Y,n}(\tsigma)^{-1}(P)\circ\mathrm{Spec}(\tsigma)\\ &=\pi_{X,n}(\tsigma)(g\circ\pi_{Y,n}(\tsigma)^{-1}(P))\end{align*}
and this gives us the result we were looking for by taking projective limits.\end{proof}

\begin{prop}
    \label{twistgal} The natural representation ${}^{\sigma}\rho_{\ell,X}$ of the group $\Gal(\bar{L}/L)$ on $\Aut(\tlsigmax)$ (resp. on $\Aut(V_\ell {}^{\sigma}X)$) satisfies the equality \begin{align}\label{rho_tordu} {}^{\sigma}\rho_{\ell,X}(\ttau)=\pi_X(\tsigma)\circ\rho_{\ell,X}(\tsigma^{-1}\ttau\tsigma)\circ\pi_X(\tsigma)^{-1}\end{align} for any $\tsigma\in\Gal(\bar{L}/K)$ which extends $\sigma$, for any $\ttau\in\Gal(\bar{L}/L)$.
\end{prop}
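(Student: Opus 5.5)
The natural representation ${}^{\sigma}\rho_{\ell,X}$ is by definition $\rho_{\ell,{}^{\sigma}X}$, the action of $\Gal(\bar{L}/L)$ on the Tate module of the abelian variety $\sigmax$ over $L$. The plan is to derive the formula purely from the cocycle relation (\ref{compo}) and the fact that $\pi_{\sigmax}$ restricted to $\Gal(\bar{L}/L)$ is $\rho_{\ell,\sigmax}$. I will work on $\tlsigmax$; the statement for $V_\ell\sigmax$ then follows by tensoring with $\mathbb{Q}_\ell$.

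\textbf{Step 1.} Fix $\ttau\in\Gal(\bar{L}/L)$ and $\tsigma$ extending $\sigma$. Since $\Gal(\bar{L}/L)$ is normal in $\Gal(\bar{L}/K)$, the element $\tsigma^{-1}\ttau\tsigma$ lies in $\Gal(\bar{L}/L)$, so $\rho_{\ell,X}(\tsigma^{-1}\ttau\tsigma)=\pi_X(\tsigma^{-1}\ttau\tsigma)$. I will decompose this as the composite of $\tsigma^{-1}\ttau$ and $\tsigma$ and apply (\ref{compo}) with the conjugate ${}^{\sigma}X$ appearing in the first factor:
\begin{align*}
\pi_X(\tsigma^{-1}\ttau\tsigma)=\pi_{\sigmax}(\tsigma^{-1}\ttau)\circ\pi_X(\tsigma).
\end{align*}
Applying (\ref{compo}) again to $\tsigma^{-1}\cdot\ttau$ on $\sigmax$ gives
\begin{align*}
\pi_{\sigmax}(\tsigma^{-1}\ttau)=\pi_{{}^{\tau}(\sigmax)}(\tsigma^{-1})\circ\pi_{\sigmax}(\ttau),
\end{align*}
where $\tau=\ttau_{\mid L}=\mathrm{id}$, so ${}^{\tau}(\sigmax)=\sigmax$.

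\textbf{Step 2.} By the remark after (\ref{compo}), $\pi_{\sigmax}(\tsigma^{-1})=\pi_X(\tsigma)^{-1}$; here we use ${}^{\sigma^{-1}}(\sigmax)=X$. Also $\pi_{\sigmax}(\ttau)=\rho_{\ell,\sigmax}(\ttau)$ because $\ttau$ fixes $L$. Combining, we get
\begin{align*}
\rho_{\ell,X}(\tsigma^{-1}\ttau\tsigma)=\pi_X(\tsigma)^{-1}\circ\rho_{\ell,\sigmax}(\ttau)\circ\pi_X(\tsigma).
\end{align*}
Conjugating both sides by $\pi_X(\tsigma)$ yields (\ref{rho_tordu}).

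The main obstacle is bookkeeping rather than ideas. One must check carefully which variety indexes each $\pi$ in (\ref{compo}), since the relation is $\pi_X(\tsigma\ttau)=\pi_{{}^{\tau}X}(\tsigma)\circ\pi_X(\ttau)$ with the inner element acting first. One also needs the inverse identity $\pi_X(\tsigma)^{-1}=\pi_{\sigmax}(\tsigma^{-1})$ with the correct source and target, which uses ${}^{\sigma^{-1}}({}^{\sigma}X)=X$ from Section \ref{twist_section}. If any of these indices seemed doubtful, I would fall back on checking the identity directly on each $\sigmax[\ell^n](\bar{L})$ from the explicit formula $P=\sigma_X^{-1}\circ Q\circ\mathrm{Spec}(\tsigma)$:
\begin{align*}
\pi_X(\tsigma)\circ\rho_{\ell,X}(\tsigma^{-1}\ttau\tsigma)\circ\pi_X(\tsigma)^{-1}(P)=\sigma_X^{-1}\circ\sigma_X\circ P\circ\mathrm{Spec}(\tsigma)^{-1}\circ\mathrm{Spec}(\tsigma^{-1}\ttau\tsigma)\circ\mathrm{Spec}(\tsigma),
\end{align*}
and this collapses to $P\circ\mathrm{Spec}(\ttau)=\rho_{\sigmax,n}(\ttau)(P)$, using the contravariance of $\mathrm{Spec}$. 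Taking projective limits then gives the result.
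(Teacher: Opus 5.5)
Your proof is correct, but your main argument takes a different route from the paper's.

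The paper does not use (\ref{compo}) here. It works directly on $\sigmax[\ell^n](\bar{L})$. It sets $Q_1=\pi_{X,n}(\tsigma)^{-1}(P)$ and $Q_2=\pi_{X,n}(\tsigma)^{-1}(P\circ\mathrm{Spec}(\ttau))$, then checks that $Q_2=Q_1\circ\mathrm{Spec}(\tsigma^{-1}\ttau\tsigma)$ using the explicit formula for $\pi_{X,n}$ and the contravariance of $\mathrm{Spec}$. That is essentially your fallback computation.

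Your main argument instead derives (\ref{rho_tordu}) formally from three facts:
\begin{itemize}
\item the composition law (\ref{compo}), applied once to $X$ (with inner element $\tsigma$) and once to $\sigmax$ (with inner element $\ttau$, where $\tau=\mathrm{id}$);
\item the inverse identity $\pi_{\sigmax}(\tsigma^{-1})=\pi_X(\tsigma)^{-1}$;
\item the fact that $\pi_{\sigmax}$ restricted to $\Gal(\bar{L}/L)$ is $\rho_{\ell,\sigmax}$, which is the paper's ${}^{\sigma}\rho_{\ell,X}$.
\end{itemize}
Your indices are right. Applying (\ref{compo}) to $\sigmax$ is legitimate because its derivation works for any abelian variety over $L$.

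What your route buys is that it reuses bookkeeping already done for (\ref{compo}) instead of repeating it. It also shows that the proposition is a purely formal consequence of $\pi$ being compatible with composition and restricting to $\rho$ on $\Gal(\bar{L}/L)$. The paper's point-level chase is more self-contained and concrete, but it re-verifies, in this special case, manipulations of $\sigma_X$ and $\mathrm{Spec}(\tsigma)$ that (\ref{compo}) already packages.
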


\begin{proof}
    First, let us notice that, unless the group $\Gal(\bar{L}/L)$ is abelian, in general $\rho_{\ell,X}(\tsigma\ttau)\neq\rho_{\ell,X}(\ttau\tsigma)$, and the formula (\ref{formula}) does not hold. We prove the formula (\ref{rho_tordu}) by looking once again at the points of $\ell^n$-torsion: for $P\in\sigmax[\ell^n](\bar{L})$, $\ttau\in\Gal(\bar{L}/L)$, we have
\begin{align*}
    {}^{\sigma}\rho_{X,n}(\ttau)(P)=P\circ\mathrm{Spec}(\ttau)
\end{align*} (the natural action of $\Gal(\bar{L}/L)$ on $\sigmax[\ell^n](\bar{L})$). Choose $Q_1$ and $Q_2$ in $X[\ell^n](\bar{L})$ and $\tsigma\in\Gal(\bar{L}/K)$ that restricts to $\sigma$ on $L$ such that the following diagram commutes, where the middle square corresponds to the action of $\ttau$ on $P$: 
\begin{center} $\begin{CD}
    X[\ell^n] @> \sigma_X^{-1} >> \sigmax[\ell^n] @= \sigmax[\ell^n] @> \sigma_X >> X[\ell^n] \\
    @A Q_1 AA @A P AA @AA P\circ\mathrm{Spec}(\ttau) A @AA Q_2 A \\
    \mathrm{Spec}(\bar{L}) @>>\mathrm{Spec}(\bar{\sigma})^{-1}> \mathrm{Spec}(\bar{L}) @>> \mathrm{Spec}(\ttau)^{-1} > \mathrm{Spec}(\bar{L}) @>>\mathrm{Spec}(\bar{\sigma})> \mathrm{Spec}(\bar{L})
    \end{CD}$ \end{center}
The first goal is to give an expression of $Q_2$ depending on $Q_1$. We have $Q_1=\pi_{X,n}(\tsigma)^{-1}(P)=\sigma_X\circ P\circ \mathrm{Spec}(\tsigma)^{-1}$ and $Q_2=\pi_{X,n}(\tsigma)^{-1}(P\circ\mathrm{Spec}(\ttau))=\sigma_X\circ (P\circ\mathrm{Spec}(\ttau))\circ \mathrm{Spec}(\tsigma)^{-1}$. In short we have \begin{align*}Q_2=Q_1\circ \mathrm{Spec}(\tsigma)\circ\mathrm{Spec}(\ttau)\circ\mathrm{Spec}(\tsigma)^{-1}=Q_1\circ\mathrm{Spec}(\tsigma^{-1}\ttau\tsigma),\end{align*} so concretely $Q_2=\rho_{X,n}(\tsigma^{-1}\ttau\tsigma)(Q_1)$ in $X[\ell^n](\bar{L})$ (as $\Gal(\bar{L}/L)$ is a normal subgroup of $\Gal(\bar{L}/K)$, the composition $\tsigma^{-1}\ttau\tsigma$ lies in $\Gal(\bar{L}/L)$). To sum it up, by following the arrows,
\begin{align*}
    {}^{\sigma}\rho_{\ell,n}(\ttau)(P)&=\sigma_X^{-1}\circ Q_2\circ\mathrm{Spec}(\tsigma)\\
    &=\sigma_X^{-1}\circ Q_1\circ\mathrm{Spec}(\tsigma^{-1}\ttau\tsigma)\circ\mathrm{Spec}(\tsigma)\\
    &=\sigma_X^{-1}\circ\sigma_X\circ P\circ\mathrm{Spec}(\tsigma)^{-1}\circ\mathrm{Spec}(\tsigma^{-1}\ttau\tsigma)\circ\mathrm{Spec}(\tsigma)\\
    &=\sigma_X^{-1}\circ\pi_{X,n}(\tsigma)^{-1}(P)\circ\mathrm{Spec}(\tsigma^{-1}\ttau\tsigma)\circ\mathrm{Spec}(\tsigma)\\
    &=\sigma_X^{-1}\circ\rho_{X,n}(\tsigma^{-1}\ttau\tsigma)(\pi_{X,n}(\tsigma)^{-1}(P))\circ\mathrm{Spec}(\tsigma)\\
    &=\pi_{X,n}(\tsigma)(\rho_{X,n}(\tsigma^{-1}\ttau\tsigma)(\pi_{X,n}(\tsigma)^{-1}(P))).
\end{align*} On the Tate modules, we get:
\begin{align*}{}^{\sigma}\rho_{\ell,X}(\ttau)=\pi_X(\tsigma)\circ\rho_{\ell,X}(\tsigma^{-1}\ttau\tsigma)\circ\pi_X(\tsigma)^{-1}.
\end{align*}\end{proof}

\begin{rem}
    It is obvious that this expression does not depend on the extension of $\sigma$ to $\Gal(\bar{L}/K)$ because the left hand side does not, but we can still show it by hand easily in the same manner as in formula (\ref{formula}).
\end{rem}

\section{Extension of the representation}\label{extension_section}

Suppose that, for every $\sigma\in\Gal(L/K)$, we have an isomorphism $h(\sigma):\sigmax\tilde{\to} X$ over $L$, satisfying the cocycle condition $h(\sigma\tau)=h(\sigma)\circ {}^{\sigma}h(\tau)$. Then, for every prime $\ell$ different from $\mathrm{char}(L)$, we can extend the action of $\Gal(\bar{L}/L)$ on $\tlx$ (and thus on $\vlx$) to an action of $\Gal(\bar{L}/K)$ by setting, for every $\tsigma\in\Gal(\bar{L}/K)$, \begin{equation}\bar{\rho}_{\ell,X}(\tsigma)=T_{\ell}h(\sigma)\circ\pi_X(\tsigma).\end{equation}We check easily, by using Proposition \ref{twist_tate_morphism} that this defines a homomorphism of groups:
\begin{align*}
    \bar{\rho}_{\ell,X}(\tsigma\ttau)&=T_{\ell}h(\sigma\tau)\circ\pi_X(\tsigma\ttau)\\
    &=T_{\ell}h(\sigma)\circ T_{\ell}{}^{\sigma}h(\tau)\circ \pi_{{}^{\tau}X}(\tsigma)\circ\pi_X(\ttau)\\
    &=T_{\ell}h(\sigma)\circ\pi_X(\tsigma)\circ T_{\ell}h(\tau)\circ\pi_X(\ttau)\\
    &=\bar{\rho}_{\ell,X}(\tsigma)\circ\bar{\rho}_{\ell,X}(\ttau),
\end{align*}
and that, for $\tsigma$ in $\Gal(\bar{L}/L)$, i.e. satisfying $\tsigma_{\mid L}=\mathrm{id}_L$, the automorphisms $\bar{\rho}_{\ell,X}(\tsigma)$ and $\rho_{\ell,X}(\tsigma)$ coincide. If such isomorphisms $h(\sigma)$ with such a cocycle condition exist for every $\sigma$ in $\Gal(L/K)$, then the abelian variety $X$ has a model over $K$, i.e. there exists an abelian variety $Y_K$ over $K$ such that $Y:=Y_K\times_K L$ is isomorphic to $X$ over $L$ by a theorem of \cite{weil_1956}, and we say that $X$ \emph{descends} to $K$. If the maps $h(\sigma)$ are not isomorphisms of abelian varieties but are just isomorphisms up to isogeny, i.e. invertible elements of $\Hom^0(\sigmax,X)$, then there exists an abelian variety $Y_K$ over $K$ such that $X$ is isogenous over $L$ to $Y:=Y_K\times_K L$ (see \cite{ribet_2004}, Theorem (8.2)) ; in this case we say that $X$ \emph{descends up to isogeny} to $K$. Notice that in this case we can only extend the Galois representation on $\Aut(V_\ell X)$ with the formula \begin{equation}\bar{\rho}_{\ell,X}(\tsigma)=V_\ell h(\sigma)\circ\pi_X(\tsigma), \end{equation} because $h(\sigma)$ is not assumed to be a genuine morphism of abelian varieties. Moreover, the extended Galois representation we just constructed "coincides" with the natural representation $\Gal(\bar{L}/K)\to\Aut(T_\ell Y_K)$ (resp. $\Gal(\bar{L}/K)\to\Aut(V_\ell Y_K)$): indeed, there exists an isomorphism (resp. an isogeny) of abelian varieties $f:X\to Y$ over $L$ such that $h(\sigma)=f^{-1}\circ {}^{\sigma}f$. Notice that $Y[\ell^n](\bar{L})=Y_K[\ell^n](\bar{L})$ because $Y$ is obtained from $Y_K$ by extension of scalars and thus the group $\Gal(\bar{L}/K)$ acts on $Y[\ell^n](\bar{L})$ the same way it acts on $Y_K[\ell^n](\bar{L})$. We can identify ${}^{\sigma}Y$ and $Y$ as abelian varieties over $L$ because $Y$ is the extension of scalars of an abelian variety defined over $K$. This gives the following commutative diagram on the $\bar{L}$-points of $\ell^n$-torsion:
\begin{equation}
    \begin{tikzcd} & {Y[\ell^n]} \arrow[r]  & {{}^{\sigma}Y[\ell^n]} \arrow[r, equal] & {Y[\ell^n]}  &  \\
\mathrm{Spec}(\bar{L}) \arrow[ru, "f\circ P"'] \arrow[rd, "P"] \arrow[rrrr, "\mathrm{Spec}(\tsigma)^{-1}", bend left=60, shift left=4] & &  &  & \mathrm{Spec}(\bar{L}) \arrow[lu, "f\circ P\circ \mathrm{Spec}(\tsigma)"'] \arrow[ld, "P\circ \mathrm{Spec}(\tsigma)"] \\
 & {X[\ell^n]} \arrow[r, "\sigma_X^{-1}"'] \arrow[uu, "f"] & {\sigmax[\ell^n]} \arrow[uu, "{}^{\sigma}f"]  & {X[\ell^n]} \arrow[uu, "f"'] &
\end{tikzcd}
\end{equation}
The homomorphism $h(\sigma)=f^{-1}\circ {}^{\sigma}f:\sigmax\to X$ completes this commutative diagram and it proves that the map that sends $\tsigma\in \Gal(\bar{L}/K)$ to $T_\ell f^{-1}\circ \bar{\rho}_{\ell,X}(\tsigma)\circ T_\ell f$ (resp. $V_\ell f^{-1}\circ \bar{\rho}_{\ell,X}(\tsigma)\circ V_\ell f$) is equal to the natural representation $\rho_{\ell,Y_K}:\Gal(\bar{L}/K)\to\Aut(T_\ell Y_K)$ (resp. $\rho_{\ell,Y_K}:\Gal(\bar{L}/K)\to\Aut(V_\ell Y_K)$) by taking projective limits.

Let us now forget the morphisms of abelian varieties $h(\sigma):\sigmax\to X$ over $L$ and suppose conversely that we can extend the $\ell$-adic Galois representation to the group $\Gal(\bar{L}/K)$, for a rational prime $\ell$ different from $\mathrm{char}(L)$. We will consider two cases: we say that $X$ satisfies the condition $(*_\ell)$ if there exists a homomorphism of groups
\begin{align*}\tag{$*_\ell$}
    \bar{\rho}_{\ell,X}:\Gal(\bar{L}/K)\to \Aut(V_{\ell}X)
\end{align*}
that restricts to $\rho_{\ell,X}$ on the subgroup $\Gal(\bar{L}/L)$ of $\Gal(\bar{L}/K)$, and commutes with $\End^0(X)$. We say that $X$ satisfies the condition $(**_\ell)$ if there exists a homomorphism of groups
\begin{align*}\tag{$**_\ell$}
    \bar{\rho}_{\ell,X}:\Gal(\bar{L}/K)\to \Aut(T_{\ell}X)
\end{align*}
that restricts to $\rho_{\ell,X}$ on the subgroup $\Gal(\bar{L}/L)$ of $\Gal(\bar{L}/K)$, and commutes with $\End(X)$. Notice that the condition $(**_\ell)$ implies the condition $(*_\ell)$. If $X$ satisfies the condition $(*_\ell)$, as $\pi_X(\tsigma)$ is an isomorphism of $\mathbb{Q}_\ell$-vector spaces from $\vlx$ to $\vlsigmax$, we can define an isomorphism $\eta(\tsigma):\vlsigmax\to\vlx$ as the composition $\eta(\tsigma)=\bar{\rho}_{\ell,X}(\tsigma)\circ\pi_X(\tsigma)^{-1}$ so that the following diagram is commutative for every $\tsigma\in\Gal(\bar{L}/L)$:
\begin{equation}
    \begin{tikzcd}
V_\ell X \arrow[r, "\pi_X(\tsigma)"] \arrow[rr, "\bar{\rho}_{\ell,X}(\tsigma)"', bend right] & \vlsigmax \arrow[r, "\eta(\tsigma)"] & V_\ell X.
\end{tikzcd}
\end{equation}
If in particular $X$ satisfies the condition $(**_\ell)$, then the map $\eta(\tsigma)$ is an isomorphism of $\mathbb{Z}_\ell$-modules from $\tlsigmax$ to $\tlx$. We will view $\eta(\tsigma)$ as an element of $\Hom(\vlsigmax,\vlx)$ or $\Hom(\tlsigmax,\tlx)$ depending on whether $X$ satisfies condition $(*_\ell)$ or the less general condition $(**_\ell)$.

\begin{prop}\label{eta} The maps $\eta(\tsigma)$ only depend on the restriction $\sigma=\tsigma_{|L}$. As a consequence, from now on we will write $\eta(\sigma)$. \end{prop}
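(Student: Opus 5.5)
We take two lifts $\tsigma_1,\tsigma_2\in\Gal(\bar{L}/K)$ of the same $\sigma\in\Gal(L/K)$ and show $\eta(\tsigma_1)=\eta(\tsigma_2)$. Then $\ttau:=\tsigma_2^{-1}\tsigma_1$ lies in the normal subgroup $\Gal(\bar{L}/L)$, and $\tsigma_1=\tsigma_2\ttau$. The whole argument is an expansion of both factors of $\eta(\tsigma_1)=\bar{\rho}_{\ell,X}(\tsigma_1)\circ\pi_X(\tsigma_1)^{-1}$ using this factorization.

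First, $\bar{\rho}_{\ell,X}$ is a group homomorphism that restricts to $\rho_{\ell,X}$ on $\Gal(\bar{L}/L)$. Hence
\begin{align*}
\bar{\rho}_{\ell,X}(\tsigma_1)=\bar{\rho}_{\ell,X}(\tsigma_2)\circ\rho_{\ell,X}(\ttau).
\end{align*}
Second, the composition rule (\ref{compo}), applied with $\ttau$ acting first (so ${}^{\ttau_{|L}}X=X$), gives
\begin{align*}
\pi_X(\tsigma_1)=\pi_X(\tsigma_2\ttau)=\pi_X(\tsigma_2)\circ\pi_X(\ttau)=\pi_X(\tsigma_2)\circ\rho_{\ell,X}(\ttau),
\end{align*}
where the last equality uses that $\pi_X$ extends $\rho_{\ell,X}$ on $\Gal(\bar{L}/L)$. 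Inverting gives $\pi_X(\tsigma_1)^{-1}=\rho_{\ell,X}(\ttau)^{-1}\circ\pi_X(\tsigma_2)^{-1}$.

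Multiplying the two expressions, the factors $\rho_{\ell,X}(\ttau)$ and $\rho_{\ell,X}(\ttau)^{-1}$ cancel:
\begin{align*}
\eta(\tsigma_1)=\bar{\rho}_{\ell,X}(\tsigma_2)\circ\rho_{\ell,X}(\ttau)\circ\rho_{\ell,X}(\ttau)^{-1}\circ\pi_X(\tsigma_2)^{-1}=\eta(\tsigma_2).
\end{align*}
The same computation holds verbatim in the integral setting $(**_\ell)$ on $\tlx$, and in $(*_\ell)$ on $\vlx$.

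The only step needing care is applying (\ref{compo}) in the right order. For the factors to cancel, $\ttau$ must sit on the right, as in $\tsigma_1=\tsigma_2\ttau$; this is what lets (\ref{compo}) use $\pi_X$ rather than $\pi_{{}^{\tau}X}$ and keeps $\rho_{\ell,X}(\ttau)$ adjacent to $\pi_X(\tsigma_2)^{-1}$. Writing instead $\tsigma_1=\ttau'\tsigma_2$ would produce $\pi_{\sigmax}(\ttau')={}^{\sigma}\rho_{\ell,X}(\ttau')$, which acts on $V_\ell{}^{\sigma}X$ rather than $\vlx$ and does not cancel directly. Beyond this ordering, no further obstacle is expected.
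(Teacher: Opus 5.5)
Your proof is correct and is essentially the paper's argument: the paper also writes $\tsigma_1=\tsigma_2(\tsigma_2^{-1}\tsigma_1)$, expands $\bar{\rho}_{\ell,X}$ and $\pi_X$ with the homomorphism property and the composition rule (\ref{compo}), and cancels using $\bar{\rho}_{\ell,X}=\pi_X=\rho_{\ell,X}$ on $\Gal(\bar{L}/L)$. Your remark on putting the element of $\Gal(\bar{L}/L)$ on the right is accurate, and it is the same ordering the paper uses.
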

\begin{proof}[Proof] Let $\tsigma_1$, $\tsigma_2\in\Gal(\bar{L}/K)$ such that both their restrictions to $L$ are equal to the same $\sigma\in\Gal(L/K)$. We have
\begin{align*}
    \eta(\tsigma_1) &=\eta(\tsigma_2\tsigma_2^{-1}\tsigma_1)\\
    &=\bar{\rho}_{\ell,X}(\tsigma_2\tsigma_2^{-1}\tsigma_1)\circ\pi_X(\tsigma_2\tsigma_2^{-1}\tsigma_1)^{-1}\\
    &=\bar{\rho}_{\ell,X}(\tsigma_2)\circ\bar{\rho}_{\ell,X}(\tsigma_2^{-1}\tsigma_1)\circ\pi_X(\tsigma_2^{-1}\tsigma_1)^{-1}\circ\pi_X(\tsigma_2)^{-1} \\
\end{align*}
As the product $\tsigma_2^{-1}\tsigma_1$ lies in $\Gal(\bar{L}/L)$, we have $\bar{\rho}_{\ell,X}(\tsigma_2^{-1}\tsigma_1)=\pi_X(\tsigma_2^{-1}\tsigma_1)$. Thus we have the equality $\bar{\rho}_{\ell,X}(\tsigma_2^{-1}\tsigma_1)\circ\pi_X(\tsigma_2^{-1}\tsigma_1)^{-1}=\mathrm{Id}_{\vlx}$ and finally $\eta(\tsigma_1)=\bar{\rho}_{\ell,X}(\tsigma_2)\circ\pi_X(\tsigma_2)^{-1}=\eta(\tsigma_2)$. \end{proof}

\begin{prop} \label{commute}
The maps $\eta(\sigma)$ commute with the action of the group $\Gal(\bar{L}/L)$ on the Tate modules, i.e. for every $\sigma\in\Gal(L/K)$ and every $\ttau\in\Gal(\bar{L}/L)$, the following diagram commutes:
\begin{center} $\begin{CD}
    \vlsigmax @>\eta(\sigma)>> \vlx\\
    @V {}^{\sigma}\rho_{\ell,X}(\ttau) VV @VV \rho_{\ell,X}(\ttau) V \\
    \vlsigmax @>>\eta(\sigma)> \vlx
    \end{CD}$ \end{center}
\end{prop}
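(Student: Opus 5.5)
We need to show $\rho_{\ell,X}(\ttau)\circ\eta(\sigma)=\eta(\sigma)\circ{}^{\sigma}\rho_{\ell,X}(\ttau)$ for all $\sigma\in\Gal(L/K)$ and $\ttau\in\Gal(\bar{L}/L)$. The plan is a direct computation. Fix any $\tsigma\in\Gal(\bar{L}/K)$ extending $\sigma$; by Proposition \ref{eta} we may write $\eta(\sigma)=\bar{\rho}_{\ell,X}(\tsigma)\circ\pi_X(\tsigma)^{-1}$. We then substitute the expression of ${}^{\sigma}\rho_{\ell,X}(\ttau)$ given by Proposition \ref{twistgal}.

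\textbf{Step 1.} Compute the right-hand side using formula (\ref{rho_tordu}):
\begin{align*}
\eta(\sigma)\circ{}^{\sigma}\rho_{\ell,X}(\ttau)&=\bar{\rho}_{\ell,X}(\tsigma)\circ\pi_X(\tsigma)^{-1}\circ\pi_X(\tsigma)\circ\rho_{\ell,X}(\tsigma^{-1}\ttau\tsigma)\circ\pi_X(\tsigma)^{-1}\\
&=\bar{\rho}_{\ell,X}(\tsigma)\circ\rho_{\ell,X}(\tsigma^{-1}\ttau\tsigma)\circ\pi_X(\tsigma)^{-1}.
\end{align*}

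\textbf{Step 2.} Since $\Gal(\bar{L}/L)$ is normal in $\Gal(\bar{L}/K)$, the element $\tsigma^{-1}\ttau\tsigma$ lies in $\Gal(\bar{L}/L)$. Because $\bar{\rho}_{\ell,X}$ restricts to $\rho_{\ell,X}$ on this subgroup, we have $\rho_{\ell,X}(\tsigma^{-1}\ttau\tsigma)=\bar{\rho}_{\ell,X}(\tsigma^{-1}\ttau\tsigma)$. As $\bar{\rho}_{\ell,X}$ is a group homomorphism, it follows that
\begin{align*}
\bar{\rho}_{\ell,X}(\tsigma)\circ\bar{\rho}_{\ell,X}(\tsigma^{-1}\ttau\tsigma)=\bar{\rho}_{\ell,X}(\ttau\tsigma)=\bar{\rho}_{\ell,X}(\ttau)\circ\bar{\rho}_{\ell,X}(\tsigma).
\end{align*}
Using $\bar{\rho}_{\ell,X}(\ttau)=\rho_{\ell,X}(\ttau)$ once more, we obtain
\begin{align*}
\eta(\sigma)\circ{}^{\sigma}\rho_{\ell,X}(\ttau)=\rho_{\ell,X}(\ttau)\circ\bar{\rho}_{\ell,X}(\tsigma)\circ\pi_X(\tsigma)^{-1}=\rho_{\ell,X}(\ttau)\circ\eta(\sigma),
\end{align*}
which is the required commutativity of the diagram.

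The only delicate point is using the conjugated form of the twisted representation from Proposition \ref{twistgal}, rather than naively writing ${}^{\sigma}\rho_{\ell,X}(\ttau)=\pi_X(\tsigma)\rho_{\ell,X}(\ttau)\pi_X(\tsigma)^{-1}$, which is false in general. Once the correct formula is used, the conjugation $\tsigma^{-1}\ttau\tsigma$ is exactly absorbed by the homomorphism property of $\bar{\rho}_{\ell,X}$. The same argument works verbatim on $\tlx$ under condition $(**_\ell)$.
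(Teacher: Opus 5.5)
Your proof is correct and follows essentially the same route as the paper: substitute the conjugated formula from Proposition \ref{twistgal} and absorb $\tsigma^{-1}\ttau\tsigma$ using the homomorphism property of $\bar{\rho}_{\ell,X}$. The paper rewrites ${}^{\sigma}\rho_{\ell,X}(\ttau)$ directly as $\eta(\sigma)^{-1}\circ\rho_{\ell,X}(\ttau)\circ\eta(\sigma)$, whereas you first compose with $\eta(\sigma)$; the computation is the same.
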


\begin{proof} 
This is a direct consequence of Proposition \ref{twistgal}: we have for any $\tsigma\in\Gal(\bar{L}/K)$ extending $\sigma$
\begin{align*}
    {}^{\sigma}\rho_{\ell,X}(\ttau)&=\pi_X(\tsigma)\circ\rho_{\ell,X}(\tsigma^{-1}\ttau\tsigma)\circ\pi_X(\tsigma)^{-1}\\&=\pi_X(\tsigma)\circ\bar{\rho}_{\ell,X}(\tsigma)^{-1}\circ\rho_{\ell,X}(\ttau)\circ\bar{\rho}_{\ell,X}(\tsigma)\circ\pi_X(\tsigma)^{-1}\\&=\eta(\sigma)^{-1}\circ\rho_{\ell,X}(\ttau)\circ\eta(\sigma).  
\end{align*} and we have exactly what we wanted. \end{proof}

\begin{prop}\label{cocycle}
    The maps $\eta(\sigma):\vlsigmax\to\vlx$ satisfy the cocycle condition $\eta(\sigma\tau)=\eta(\sigma)\circ{}^{\sigma}\eta(\tau)$ for every $\sigma$, $\tau\in\Gal(L/K)$.
\end{prop}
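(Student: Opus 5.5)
We unwind the definitions and reduce everything to the composition rule (\ref{compo}) and the fact that $\bar{\rho}_{\ell,X}$ is a group homomorphism. Fix $\sigma,\tau\in\Gal(L/K)$ and choose lifts $\tsigma,\ttau\in\Gal(\bar{L}/K)$. By Proposition \ref{eta}, we may compute $\eta(\sigma\tau)$ using the lift $\tsigma\ttau$ of $\sigma\tau$. Since $\eta(\tau):V_\ell{}^{\tau}X\to\vlx$ is a $\Gal(\bar{L}/L)$-equivariant homomorphism (Proposition \ref{commute}), its conjugate ${}^{\sigma}\eta(\tau)$ is well defined by formula (\ref{formula}). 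Taking $Y={}^{\tau}X$ there, and using ${}^{\sigma}({}^{\tau}X)={}^{\sigma\tau}X$, we get
\begin{align*}
{}^{\sigma}\eta(\tau)=\pi_X(\tsigma)\circ\eta(\tau)\circ\pi_{{}^{\tau}X}(\tsigma)^{-1}:V_\ell{}^{\sigma\tau}X\to V_\ell{}^{\sigma}X .
\end{align*}

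Now we compute $\eta(\sigma)\circ{}^{\sigma}\eta(\tau)$ by substituting $\eta(\sigma)=\bar{\rho}_{\ell,X}(\tsigma)\circ\pi_X(\tsigma)^{-1}$ and $\eta(\tau)=\bar{\rho}_{\ell,X}(\ttau)\circ\pi_X(\ttau)^{-1}$. The factor $\pi_X(\tsigma)^{-1}\circ\pi_X(\tsigma)$ cancels, leaving
\begin{align*}
\eta(\sigma)\circ{}^{\sigma}\eta(\tau)=\bar{\rho}_{\ell,X}(\tsigma)\circ\bar{\rho}_{\ell,X}(\ttau)\circ\pi_X(\ttau)^{-1}\circ\pi_{{}^{\tau}X}(\tsigma)^{-1}.
\end{align*}
Two facts finish the computation:
\begin{enumerate}[(i)]
\item by (\ref{compo}), $\pi_{{}^{\tau}X}(\tsigma)\circ\pi_X(\ttau)=\pi_X(\tsigma\ttau)$, so the last two factors equal $\pi_X(\tsigma\ttau)^{-1}$;
\item since $\bar{\rho}_{\ell,X}$ is a homomorphism, the first two factors equal $\bar{\rho}_{\ell,X}(\tsigma\ttau)$.
\end{enumerate}
Hence $\eta(\sigma)\circ{}^{\sigma}\eta(\tau)=\bar{\rho}_{\ell,X}(\tsigma\ttau)\circ\pi_X(\tsigma\ttau)^{-1}=\eta(\sigma\tau)$, which is the cocycle condition.

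The only delicate point is bookkeeping: the conjugation formula must be applied with source variety $Y={}^{\tau}X$ and target $X$, so the $\pi$ appearing on the right is $\pi_{{}^{\tau}X}(\tsigma)$ and not $\pi_X(\tsigma)$. This is exactly the factor (\ref{compo}) needs. One should also note that the lift chosen in formula (\ref{formula}) is irrelevant, which was shown after (\ref{formula}). The same computation works verbatim on Tate modules under condition $(**_\ell)$.
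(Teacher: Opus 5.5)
Your proof is correct and is essentially the paper's argument. Both use the conjugation formula (\ref{formula}) with source ${}^{\tau}X$, justified by the equivariance from Proposition \ref{commute}, together with the composition rule (\ref{compo}) and the multiplicativity of $\bar{\rho}_{\ell,X}$; the only difference is that the paper runs the chain of equalities from $\eta(\sigma\tau)$ toward $\eta(\sigma)\circ{}^{\sigma}\eta(\tau)$, whereas you go in the reverse direction.
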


\begin{proof}[Proof] By virtue of Proposition \ref{commute} and the formula (\ref{formula}), we have \begin{align*}
    \eta(\tau)=\pi_X(\tsigma)^{-1}\circ {}^{\sigma}\eta(\tau)\circ\pi_{{}^{\tau}X}(\tsigma).
\end{align*} We write
\begin{align*}
    \eta(\sigma\tau) &=\bar{\rho}_{\ell,X}(\tsigma\ttau)\circ\pi_X(\tsigma\ttau)^{-1}\\ &=\bar{\rho}_{\ell,X}(\tsigma)\circ\bar{\rho}_{\ell,X}(\ttau)\circ\pi_X(\ttau)^{-1}\circ\pi_{{}^{\tau}X}(\tsigma)^{-1}\\ &=\bar{\rho}_{\ell,X}(\tsigma)\circ\eta(\tau)\circ\pi_{{}^{\tau}X}(\tsigma)^{-1}\\ &=\bar{\rho}_{\ell,X}(\tsigma)\circ\pi_X(\tsigma)^{-1}\circ {}^{\sigma}\eta(\tau)\circ \pi_{{}^{\tau}X}(\tsigma)\circ \pi_{{}^{\tau}X}(\tsigma)^{-1}\\ &=\eta(\sigma)\circ {}^{\sigma}\eta(\tau)
\end{align*}
and this gives us what we wanted.
\end{proof}

In summary, if $X$ satisfies the condition $(*_\ell)$, we have a $\Gal(\bar{L}/L)$-equivariant isomorphism $\eta(\sigma):\vlsigmax\to\vlx$ for every $\sigma$ in $\Gal(L/K)$, satisfying the cocycle condition $\eta(\sigma\tau)=\eta(\sigma)\circ {}^{\sigma}\eta(\tau)$. If in particular $X$ satisfies the condition $(**_\ell)$, then $\eta(\sigma)$ is a $\Gal(\bar{L}/L)$-equivariant isomorphism from $\tlsigmax$ to $\tlx$ with the same cocycle condition. As we would like to study some descent properties for $X$, we want to have isomorphisms (or isogenies) of abelian varieties over $L$ from $\sigmax$ to $X$ with such a cocycle condition. From now on and until the end of the article, assume that $L$ is either a finite field, a function field of positive characteristic different from $2$, or an algebraic number field. We have the following well-known theorem:

\begin{thm}[Isogeny theorem] \label{faltings}
    Let $Y$, $X$ be abelian varieties over $L$. Then the following map is an isomorphism of $\mathbb{Z}_\ell$-modules:
    \begin{equation}\label{faltings_eq}\begin{split}
        \Phi :\Hom_L(Y,X)\otimes_{\mathbb{Z}}\mathbb{Z}_{\ell}&\rightarrow \Hom_{\Gal(\bar{L}/L)}(T_{\ell}Y,\tlx)\\
        f\otimes \alpha&\mapsto \alpha\cdot T_{\ell}f,\end{split}
    \end{equation} where $\Hom_L(Y,X)$ denotes the morphisms of abelian varieties from $Y$ to $X$ over $L$.
\end{thm}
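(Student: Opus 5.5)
The Isogeny theorem is the Tate conjecture for homomorphisms of abelian varieties. It was proved by Tate for finite fields, by Zarhin (and Mori) for function fields of characteristic different from $2$, and by Faltings for number fields. In the paper it is enough to cite these results. If I were to reprove it, I would follow Tate's strategy, which is uniform across the three cases once one field-specific finiteness input is admitted.

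\emph{Step 1: reduce to endomorphisms and check injectivity and saturation.} Replacing $Y$ and $X$ by $Z=Y\times X$, the module $\Hom_L(Y,X)$ is a direct summand of $\End_L(Z)$, cut out by the idempotents of the product. The same idempotents cut $\Hom_{\Gal(\bar{L}/L)}(T_\ell Y,\tlx)$ out of $\End_{\Gal(\bar{L}/L)}(T_\ell Z)$, so it suffices to treat $Y=X$. Injectivity of $\Phi$ is the classical fact that $\End(X)\otimes\mathbb{Z}_\ell\to\End(\tlx)$ is injective: $\End(X)$ is a finitely generated free $\mathbb{Z}$-module, and the degree is a norm form, so an element with $T_\ell f=0$ kills all $\ell$-power torsion and must be zero. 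The same argument shows the cokernel of $\Phi$ is torsion-free: if $T_\ell f$ is divisible by $\ell$, then $f$ kills $X[\ell]$ and hence factors as $\ell\circ g$. Therefore surjectivity can be checked after tensoring with $\mathbb{Q}_\ell$, i.e. on $\End^0(X)\otimes\mathbb{Q}_\ell\to\End_{\Gal(\bar{L}/L)}(\vlx)$.

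\emph{Step 2: the key lemma.} Write $E_\ell=\End^0(X)\otimes\mathbb{Q}_\ell$. I would show that for every $\Gal(\bar{L}/L)$-stable subspace $W\subseteq\vlx$ there is an element $u\in E_\ell$ with $u(\vlx)=W$.
\begin{enumerate}
\item Set $W_\ell=W\cap\tlx$ and $X_n=X/(\text{image of } W_\ell \text{ in } X[\ell^n])$. The quotient maps $X\to X_n$ are isogenies defined over $L$, since $W$ is Galois-stable.
\item Apply the \emph{finiteness} input: up to $L$-isomorphism there are only finitely many abelian varieties over $L$ that are isogenous to $X$ by $\ell$-power isogenies (with bounded polarization degree if needed, via Zarhin's trick $X^4\times\hat{X}^4$).
\item Finiteness gives $X_n\cong X_m$ for infinitely many pairs $n<m$. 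Composing these isomorphisms with the quotient isogenies produces elements $u_i\in\End(X)$.
\item A compactness argument in $\End(X)\otimes\mathbb{Z}_\ell$ gives a limit $u$ with $T_\ell u(\tlx)=W_\ell$, hence $u(\vlx)=W$.
\end{enumerate}

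\emph{Step 3: conclude by a double-commutant argument.} The key lemma implies that $\vlx$ is a semisimple $\Gal(\bar{L}/L)$-module: a stable $W$ equals $\vlx\cdot e$ for an idempotent of the semisimple algebra $E_\ell$, so it has a stable complement. Apply the key lemma to $X\times X$ and to the graph $W=\{(x,\phi x)\}$ of a Galois-equivariant $\phi\in\End(\vlx)$. This shows that $\phi$ lies in the commutant of the commutant of $E_\ell$. The double-commutant theorem, applied to the semisimple algebra $E_\ell$, then gives $\phi\in E_\ell$.

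The main obstacle is the finiteness statement in Step 2, together with the semisimplicity it yields.
\begin{itemize}
\item Over a finite field it is elementary: there are finitely many polarized abelian varieties of fixed dimension and polarization degree over $\mathbb{F}_q$.
\item Over function fields it is Zarhin's theorem, using moduli of polarized abelian varieties and heights on them. The hypothesis $\mathrm{char}\neq 2$ enters here, in the original proofs.
\item Over number fields it requires Faltings' height machinery: the boundedness of the Faltings height in an isogeny class, together with Northcott finiteness and the Raynaud and semistable reduction arguments.
\end{itemize}
This is precisely the input I would import from the literature rather than reprove.
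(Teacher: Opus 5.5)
Your proposal agrees with the paper. The paper gives no argument of its own and simply cites Tate (finite fields), Zarhin and Mori (function fields) and Faltings (number fields), exactly as you say suffices. Your added outline is the standard argument behind those references: reduction to $Y=X$, torsion-freeness of the cokernel, Tate's key lemma obtained from finiteness of the $\ell$-power isogeny class, then semisimplicity and the double-commutant argument. You correctly identify the field-specific finiteness statement as the only input that has to be imported from the literature.
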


This theorem was proven in the finite field case by Tate (\cite{tate_66}), in the function field case by Zahrin and Mori (\cite{zarhin_74} and \cite{mori_77}) and in the number field case by Faltings (\cite{faltings_86}). It implies that the map
\begin{equation}\label{faltings_eq_tensored}\begin{split}
        \Phi_{\mathbb{Q}_\ell} :\Hom_L^0(Y,X)\otimes_{\mathbb{Q}}\mathbb{Q}_{\ell}&\rightarrow \Hom_{\Gal(\bar{L}/L)}(V_{\ell}Y,\vlx)\\
        f\otimes \alpha&\mapsto \alpha\cdot V_{\ell}f.\end{split}
\end{equation} is an isomorphism of $\mathbb{Q}_\ell$-vector spaces. As a consequence, if $X$ satisfies the condition $(*_\ell)$, then for every $\sigma$ in $\Gal(L/K)$ there exists a non-trivial morphism of abelian varieties from $\sigmax$ to $X$ because the right-hand side of (\ref{faltings_eq_tensored}) is non-trivial for $Y=\sigmax$ (it contains $\eta(\sigma))$. This implies:

\begin{thm}
    If $X$ is a simple abelian variety over $L$ satisfying the condition $(*_\ell)$ for a prime $\ell$ different from $\mathrm{char}(L)$, then $X$ is isogenous to each of its Galois conjugates $\sigmax$, for $\sigma$ in $\Gal(L/K)$.
\end{thm}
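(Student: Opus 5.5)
The plan is to turn the non-zero $\Gal(\bar{L}/L)$-equivariant map $\eta(\sigma)$ into a non-zero morphism of abelian varieties from $\sigmax$ to $X$ via Theorem \ref{faltings}, and then to use simplicity of both varieties to upgrade it to an isogeny.

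Fix $\sigma\in\Gal(L/K)$. By Proposition \ref{commute}, $\eta(\sigma):\vlsigmax\to\vlx$ is a $\Gal(\bar{L}/L)$-equivariant isomorphism, hence a non-zero element of $\Hom_{\Gal(\bar{L}/L)}(V_\ell\sigmax,\vlx)$. By the tensored isogeny theorem (\ref{faltings_eq_tensored}) applied with $Y=\sigmax$, the map $\Phi_{\mathbb{Q}_\ell}$ is surjective. Therefore $\Hom^0_L(\sigmax,X)\otimes_{\mathbb{Q}}\mathbb{Q}_\ell\neq 0$, so $\Hom_L(\sigmax,X)\neq 0$, and we obtain a non-zero morphism $g:\sigmax\to X$ over $L$. (Clearing denominators of an element of $\Hom^0$ gives a genuine morphism.)

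Next I check that $\sigmax$ is simple. The map $\sigma_X$ identifies $\sigmax$ and $X$ as schemes, and it carries abelian subvarieties of $X$ bijectively to abelian subvarieties of $\sigmax$: for an abelian subvariety $Z\subset X$, ${}^{\sigma}Z\hookrightarrow\sigmax$ is again an abelian subvariety. So a non-trivial proper abelian subvariety of $\sigmax$ would give one of $X$ by applying $\sigma^{-1}$. Hence $\sigmax$ is simple of dimension $g$.

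Now $g:\sigmax\to X$ is non-zero with both source and target simple. The identity component of $\ker g$ is an abelian subvariety of $\sigmax$ that is not all of $\sigmax$, since $g\neq0$; so it is trivial and $\ker g$ is finite. The image $g(\sigmax)$ is a non-zero abelian subvariety of $X$, so it equals $X$. Thus $g$ is surjective with finite kernel between varieties of equal dimension, which is an isogeny.

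This argument works for every $\sigma$. The only potentially delicate point is the simplicity of $\sigmax$; the rest is a direct combination of Proposition \ref{commute} with Theorem \ref{faltings}.
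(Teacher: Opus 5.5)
Your proposal is correct and is essentially the paper's argument: the isogeny theorem (Theorem \ref{faltings}, tensored with $\mathbb{Q}$) applied to the non-zero equivariant map $\eta(\sigma)$ gives a non-zero element of $\Hom_L({}^{\sigma}X,X)$, and simplicity upgrades it to an isogeny; you also check that ${}^{\sigma}X$ is simple of dimension $g$, which the paper uses without comment. One small remark: your kernel step is unnecessary, and slightly imprecise in characteristic $p$ where $\ker g$ may be non-reduced, because surjectivity of $g$ together with $\dim {}^{\sigma}X=\dim X$ already forces $\ker g$ to be finite.
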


\begin{proof}[Proof]
    Any non-trivial homomorphism between two simple abelian varieties is an isogeny.
\end{proof}

\begin{rem}
    If $L$ is a finite field, we do not actually need to ask for $X$ to be simple, because two abelian varieties over a finite field are isogenous if and only if their $\ell$-adic Tate modules are $\Gal(\bar{L}/L)$-isomorphic for a prime $\ell$ (see \cite{milne_waterhouse_69}, Theorem 7).
\end{rem}

\begin{rem}
Up until now, we did not use the fact that the extended representation $\bar{\rho}_{\ell,X}$ commutes with the endomorphisms of $X$. However, from now on, this assumption becomes necessary.
\end{rem}

An abelian variety defined over $L$ that is isogenous to each of its $\Gal(L/K)$-conjugates is said to be a \emph{K-abelian variety}. From now on, we ask $X$ to be simple such that $\End^0(X)=\End(X)\otimes\mathbb{Q}=F$ is a number field.

\begin{prop}\label{lemma}
    Assume that $X$ satisfies the condition $(*_\ell)$ for a prime $\ell$ different from $\mathrm{char}(L)$. Then for every $\sigma\in\Gal(L/K)$ there exist an isomorphism up to isogeny $\eta_0(\sigma)\in\Hom_L^0(\sigmax,X)$ commuting with $F$ and $\alpha_\ell(\sigma)$ in $F_\ell^\times:=(F\otimes\mathbb{Q}_\ell)^\times$, such that $\eta(\sigma)=\alpha_\ell(\sigma)\cdot V_\ell(\eta_0(\sigma))$.
\end{prop}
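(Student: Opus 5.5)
By the preceding Theorem, $X$ is isogenous to $\sigmax$, so I first fix an isogeny $\mu\in\Hom_L(\sigmax,X)$, which is invertible in $\Hom^0_L(\sigmax,X)$. Since $X$ is simple, $\End^0(\sigmax)$ is also a field, identified with $F$ via $f\mapsto{}^{\sigma}f$.

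\textbf{Step 1: find an $\eta_0(\sigma)$ commuting with $F$.} Conjugation by $\mu$ gives a field automorphism $c_\mu$ of $F$, namely $c_\mu(a)\circ\mu=\mu\circ{}^{\sigma}a$. By Faltings' isogeny theorem (\ref{faltings_eq_tensored}), $\Hom^0_L(\sigmax,X)=\mu\circ\End^0(\sigmax)$ is one-dimensional over $F$. So every element $\mu\circ{}^{\sigma}b$ with $b\in F$, $b\neq 0$, induces the same automorphism $c_\mu$ of $F$. The issue is therefore whether $c_\mu$ is the identity, and this is exactly where the hypothesis that $\bar\rho_{\ell,X}$ commutes with $\End^0(X)$ must be used.

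\textbf{Step 2: $\eta(\sigma)$ commutes with $F$.} For $a\in F$ and $\tsigma$ extending $\sigma$, Proposition \ref{twist_tate_morphism} gives $V_\ell{}^{\sigma}a=\pi_X(\tsigma)\circ V_\ell a\circ\pi_X(\tsigma)^{-1}$. Hence
\begin{equation*}
\eta(\sigma)\circ V_\ell{}^{\sigma}a=\bar\rho_{\ell,X}(\tsigma)\circ V_\ell a\circ\pi_X(\tsigma)^{-1}=V_\ell a\circ\eta(\sigma),
\end{equation*}
using the commutation of $\bar\rho_{\ell,X}$ with $F$.

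\textbf{Step 3: write $\eta(\sigma)$ in terms of $\mu$.} By Proposition \ref{commute}, $\eta(\sigma)$ is $\Gal(\bar L/L)$-equivariant. By (\ref{faltings_eq_tensored}) it therefore lies in $\Hom^0_L(\sigmax,X)\otimes_{\mathbb Q}\mathbb Q_\ell=V_\ell\mu\circ(F\otimes\mathbb Q_\ell)$, so
\begin{equation*}
\eta(\sigma)=V_\ell\mu\circ V_\ell{}^{\sigma}\beta\quad\text{for some }\beta\in F_\ell.
\end{equation*}
Since $\eta(\sigma)$ and $V_\ell\mu$ are both invertible, $\beta\in F_\ell^\times$. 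Substituting into Step 2 and using that $F_\ell$ is commutative gives $V_\ell(c_\mu(a))=V_\ell a$ as elements of $F_\ell$, after cancelling the invertible $\eta(\sigma)$. Injectivity of $F\to F_\ell$ then yields $c_\mu(a)=a$, so $c_\mu=\mathrm{id}$.

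\textbf{Conclusion.} Set $\eta_0(\sigma):=\mu$. It commutes with $F$ in the sense $a\circ\mu=\mu\circ{}^{\sigma}a$. Moreover $V_\ell{}^{\sigma}\beta$ corresponds to the action of $\beta$, which by the commutation just proved can be moved to the left of $V_\ell\mu$. With $\alpha_\ell(\sigma):=\beta\in F_\ell^\times$ acting on $V_\ell X$, this gives $\eta(\sigma)=\alpha_\ell(\sigma)\cdot V_\ell\eta_0(\sigma)$.

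The main obstacle is the bookkeeping in Step 3. One has to correctly identify $\Hom^0(\sigmax,X)\otimes\mathbb Q_\ell$ as a free $F_\ell$-module of rank one generated by $V_\ell\mu$, via the $F$-actions through ${}^{\sigma}(\cdot)$ on the source and the natural action on the target. One also has to check that the map $F\to\End(V_\ell X)$, extended $\mathbb{Q}_\ell$-linearly to $F_\ell$, is injective, which holds because $F_\ell$ acts faithfully on $V_\ell X$.
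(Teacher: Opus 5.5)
Your proof is correct and is essentially the paper's argument: the isogeny theorem, the commutation of $\eta(\sigma)$ with $F$ via Proposition \ref{twist_tate_morphism}, the one-dimensionality of $\Hom^0_L(\sigmax,X)$ over $F$, and cancellation of an $F_\ell$-unit. The difference is that you place the scalar on the source side and use the automorphism $c_\mu$ to spell out the step the paper only states as ``$\eta_0(\sigma)$ commutes with $F$ because $\eta(\sigma)$ does''. Two points are phrased loosely but are not gaps: the faithfulness of $F_\ell$ on $V_\ell X$ (needed for $\beta\in F_\ell^\times$) is circular as you justify it, but it follows from the injectivity in (\ref{faltings_eq_tensored}) with $Y=X$, and $\Hom^0_L(\sigmax,X)=\mu\circ\End^0(\sigmax)$ needs only that $\mu$ is invertible, not Faltings.
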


\begin{proof}
    When we take $Y=\sigmax$, both sides of equation (\ref{faltings_eq_tensored}) are $\mathbb{Q}_\ell$-vector spaces of dimension equal to the rank $r$ of the free $\mathbb{Z}$-module $\Hom_L(\sigmax,X)$, so the morphism $\eta(\sigma)$ will \emph{a priori} be equal to a sum $\sum \alpha_i V_\ell g_i$ with $\alpha_i\in\mathbb{Q}_\ell$, and $g_i\in\Hom_L^0(\sigmax,X)$, for $i=1,\cdots,r$. We required the extended representation $\bar{\rho}_{\ell,X}$ to commute with $F$, and the canonical identification between $\End^0(X)$ and $\End^0(\sigmax)$ given by $g\mapsto {}^{\sigma}g$ commutes with $\pi_X(\tsigma)$ by formula (\ref{formula}). Thus $\eta(\sigma)$ commutes with $F$ and $\Hom_L^0(\sigmax,X)$ becomes a vector space over $F$ with scalar multiplication defined as 
    \begin{align*}
        F\times\Hom_L^0(\sigmax,X)&\to \Hom_L^0(\sigmax,X)\\
        (f,\mu)&\mapsto f\circ \mu.
    \end{align*}
    Its dimension as a vector space over $F$ is $1$: first of all, its dimension is strictly positive because there exists a non-trivial morphism from $\sigmax$ to $X$. Then, choose a non-zero $\psi\in\Hom_L^0(X,\sigmax)$ and define the $F$-linear form
    \begin{align*}
        \Hom_L^0(\sigmax,X)&\to\End^0(X)=F\\
        \phi&\mapsto\phi\circ \psi.
    \end{align*} The kernel of this map is obviously $\{0\}$, so the map is injective. Because the dimension of $\Hom_L^0(\sigmax,X)$ as an $F$-vector space is greater than $1$, this map is an isomorphism and so its dimension is $1$. As a consequence, there exists $\eta_0(\sigma)\in\Hom_L^0(\sigmax,X)$ and $f_i\in F$ such that $g_i = f_i\circ\eta_0(\sigma)$ for $i=1,\cdots,r$, thus $\Phi_{\mathbb{Q}_\ell}^{-1}(\eta(\sigma))=(\sum f_i\otimes\alpha_i)\cdot \eta_0(\sigma)$, and we set $\alpha_\ell(\sigma):=\sum f_i\otimes\alpha_i\in F_\ell$. Finally, $\alpha_\ell(\sigma)$ lies in $F_\ell^\times$ for every $\sigma\in\Gal(L/K)$ because $\eta(\sigma)$ and $V_\ell(\eta_0(\sigma))$ both have an inverse in the free $F_\ell$-module of rank one $\Hom_{\Gal(\bar{L}/L)}(\vlx,\vlsigmax)$, and $\eta_0(\sigma)$ commutes with $F$ because $\eta(\sigma)$ does.
\end{proof}

Note that the choice for $\eta_0(\sigma)$ is not unique. However, as $\eta(\mathrm{id})=\mathrm{id}_{V_\ell X}$, we can (and will) set $\eta_0(\mathrm{id})=\mathrm{id}_X$ and $\alpha_\ell(\mathrm{id})=1$ independently of $F$. The morphism $\eta_0(\sigma)$ can always be chosen to be a genuine isogeny from $\sigmax$ to $X$ after multiplying by an element of $\mathbb{Q}$.

\begin{rem}\label{Sur_Z}
    If $\End(X)=\mathbb{Z}$ and if $X$ satisfies the condition $(**_\ell)$, then there exists for every $\sigma\in\Gal(L/K)$ an isogeny $\eta_0(\sigma)\in\Hom_L(\sigmax,X)$ and $\alpha_\ell(\sigma)\in\mathbb{Z}_\ell$ such that $\eta(\sigma)=\alpha_\ell(\sigma)\cdot T_\ell(\eta_0(\sigma))$. Indeed, we know that $\Hom_L(\sigmax,X)$ is a free $\mathbb{Z}$-module and it is of rank one, because the dimension of $\Hom_L^0(\sigmax,X)$ as a $\mathbb{Q}$-vector space is one.
\end{rem}

\begin{prop}\label{isom}
    Assume that $\End(X)=\mathbb{Z}$ and that $L$ is a number field. If $X$ satisfies the condition $(**_\ell)$ for every prime $\ell$, then $X$ and $\sigmax$ are isomorphic over $L$ for every $\sigma\in\Gal(L/K)$.
\end{prop}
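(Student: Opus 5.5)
By Remark \ref{Sur_Z}, the condition $(**_\ell)$ at a prime $\ell$ gives an isogeny $\eta_0(\sigma)\in\Hom_L(\sigmax,X)$ and a scalar $\alpha_\ell(\sigma)\in\mathbb{Z}_\ell$ with $\eta(\sigma)=\alpha_\ell(\sigma)\cdot T_\ell(\eta_0(\sigma))$. The plan is to show that a generator of $\Hom_L(\sigmax,X)$ induces an isomorphism on every integral Tate module, and hence is an isomorphism of abelian varieties.

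Since $\Hom_L(\sigmax,X)$ is free of rank one over $\mathbb{Z}$, I fix a generator $\eta_0(\sigma)$ once and for all, independently of $\ell$. For every prime $\ell$ (with $\mathrm{char}(L)=0$, all primes are allowed), Theorem \ref{faltings} identifies $\Hom_{\Gal(\bar L/L)}(\tlsigmax,\tlx)$ with $\Hom_L(\sigmax,X)\otimes\mathbb{Z}_\ell=\mathbb{Z}_\ell\cdot T_\ell\eta_0(\sigma)$. The map $\eta(\sigma)$ is $\Gal(\bar L/L)$-equivariant by Proposition \ref{commute}, and under $(**_\ell)$ it is an isomorphism of $\mathbb{Z}_\ell$-modules $\tlsigmax\to\tlx$. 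So $\eta(\sigma)=\alpha_\ell\, T_\ell\eta_0(\sigma)$ with $\alpha_\ell\in\mathbb{Z}_\ell$.

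Next I apply the same argument to $\eta(\sigma)^{-1}$. It lies in $\Hom_{\Gal}(\tlx,\tlsigmax)=\mathbb{Z}_\ell\cdot T_\ell\psi$, where $\psi$ generates $\Hom_L(X,\sigmax)$, so $\eta(\sigma)^{-1}=\beta_\ell\,T_\ell\psi$. This gives
\[
\mathrm{id}=\alpha_\ell\beta_\ell\,T_\ell(\eta_0(\sigma)\circ\psi).
\]
Since $\End(X)=\mathbb{Z}$, we have $\eta_0(\sigma)\circ\psi=[m]$ for some nonzero integer $m$, and $m$ does not depend on $\ell$. The identity then reads $\alpha_\ell\beta_\ell m=1$ in $\mathbb{Z}_\ell$, so $m\in\mathbb{Z}_\ell^\times$ for every $\ell$, that is, $\ell\nmid m$. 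Because this holds for all primes $\ell$, we get $m=\pm1$. Hence $\eta_0(\sigma)\circ\psi=\pm\mathrm{id}_X$.

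It remains to conclude that $\eta_0(\sigma)$ is an isomorphism. The degrees multiply to $1$, so both $\eta_0(\sigma)$ and $\psi$ are isogenies of degree $1$, hence isomorphisms over $L$. Alternatively, $\eta_0(\sigma)$ is an isogeny whose kernel has trivial $\ell$-part for every $\ell$, since $T_\ell\eta_0(\sigma)$ is invertible when $\alpha_\ell$ is a unit. In characteristic $0$ every isogeny is separable, so its kernel is then trivial.

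The delicate step is the choice of $\eta_0(\sigma)$ and $\psi$ independently of $\ell$, as generators of the rank-one $\mathbb{Z}$-modules. This is what makes the integer $m$ uniform in $\ell$, so that the unit conditions at every prime combine. It is also the only place where "for every prime $\ell$" and the number field hypothesis are used: without them one only excludes primes $\ell$ different from the characteristic, and inseparable kernels could not be ruled out.
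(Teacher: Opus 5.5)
Your proof is correct and follows essentially the paper's route: fix a generator $\eta_0(\sigma)$ of the rank-one $\mathbb{Z}$-module $\Hom_L(\sigmax,X)$ independently of $\ell$, show that an $\ell$-independent integer attached to it is an $\ell$-adic unit for every prime $\ell$, and conclude that this integer is $\pm1$. The paper takes that integer to be $\deg(\eta_0(\sigma))$, using the inverse isogeny $\zeta_0(\sigma)$ with $\eta_0(\sigma)\circ\zeta_0(\sigma)=[\deg(\eta_0(\sigma))]_X$, and reads off $\ell\nmid\deg(\eta_0(\sigma))$ directly from the invertibility of $\alpha_\ell(\sigma)\cdot T_\ell(\eta_0(\sigma))$; you instead apply the isogeny theorem a second time to $\eta(\sigma)^{-1}$ and work with $\eta_0(\sigma)\circ\psi=[m]$, which makes the unit condition $\alpha_\ell\beta_\ell m=1$ fully explicit.
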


\begin{proof}
    Extending the representation for a prime $\ell$ gives an isogeny $\eta_0(\sigma):\sigmax\to X$ for every $\sigma\in\Gal(L/K)$ ; choose $\eta_0(\sigma)$ to be a generator of $\Hom_L(\sigmax,X)$ as a $\mathbb{Z}$-module. We know that there exists an "inverse isogeny" $\zeta_0(\sigma):X\to\sigmax$ such that $\eta_0(\sigma)\circ\zeta_0(\sigma)=[\mathrm{deg}(\eta_0(\sigma))]_X$ and $\zeta_0(\sigma)\circ\eta_0(\sigma)=[\mathrm{deg}(\eta_0(\sigma))]_{\sigmax}$. There exists $\alpha_\ell(\sigma)\in\mathbb{Z}_\ell$ such that the product $\eta(\sigma)=\alpha_\ell(\sigma)\cdot T_\ell(\eta_0(\sigma))$ is invertible in the $\mathbb{Z}_\ell$-module $\Hom(\tlsigmax,\tlx)$, so $\mathrm{deg}(\eta_0(\sigma))$ cannot be divisible by $\ell$. If the representation extends for every prime $\ell$, then $\mathrm{deg}(\eta_0(\sigma))$ is not divisible by any prime, so $\mathrm{deg}(\eta_0(\sigma))=1$ and $\eta_0(\sigma)$ is an isomorphism for every $\sigma\in\Gal(L/K)$.
\end{proof}

\section{A descent result}

Ribet proved in \cite{ribet_1994} that, if the algebra $\End^0(X)$ is a totally real field $F$ of degree $g$ (in this case we say that $X$ has \emph{real multiplication}), and if for every $\sigma\in\Gal(L/K)$ there is a $F$-equivariant isomorphism up to isogeny from $\sigmax$ to $X$, then there exists a $(2,\cdots,2)$-extension $K'$ of $K$, i.e. a Galois extension $K'$ of $K$ with $\Gal(K'/K)\simeq \mathbb{Z}/2\mathbb{Z}\times\cdots\times\mathbb{Z}/2\mathbb{Z}$, such that $X$ is isogenous over $L$ to an abelian variety defined over $K'$. A careful study of the article shows that the condition about the degree of $F$ can be dropped, and that the result still holds. In order to state the theorem in the greatest generality, we will for the moment not assume that $X$ satisfies the condition $(*_\ell)$ for any prime $\ell$. However we keep the notations from before, so we will write $\eta_0(\sigma)\in\Hom_L^0(\sigmax,X)$ for every $\sigma\in\Gal(L/K)$, and will ask that they commute with $F$. We still assume that every endomorphism of $X_{\bar{L}}$ is defined over $L$. The main result of this section is the following:

\begin{thm}\label{ribet}
    Let $X$ be a $K$-abelian variety defined over $L$, with $\End^0(X)= F$ a totally real number field, and for every $\sigma\in\Gal(L/K)$ a non-trivial morphism $\eta_0(\sigma)\in\Hom_L^0(\sigmax,X)$ commuting with $F$. Then $X$ is $F$-equivariantly isogenous over $L$ to an abelian variety defined over a $(2,...,2)$-extension $K'$ of $K$, with $K'\subset L$.
\end{thm}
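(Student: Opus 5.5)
We will follow Ribet's cohomological approach. Rescaling by elements of $\mathbb{Q}^\times$, we may assume each $\eta_0(\sigma)$ is a genuine $F$-equivariant isogeny from $\sigmax$ to $X$, with $\eta_0(\mathrm{id})=\mathrm{id}_X$. As in the proof of Proposition \ref{lemma}, $\Hom_L^0(\sigmax,X)$ is a one-dimensional $F$-vector space, and every $F$-equivariant element of it is invertible in $\Hom^0$. Hence
\begin{equation*}
c(\sigma,\tau):=\eta_0(\sigma)\circ{}^{\sigma}\eta_0(\tau)\circ\eta_0(\sigma\tau)^{-1}
\end{equation*}
lies in $\End^0(X)^\times=F^\times$. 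The first step is to determine how $\Gal(L/K)$ acts on $F$ through the identification $g\mapsto{}^{\sigma}g$ transported by $\eta_0(\sigma)$. Since each $\eta_0(\sigma)$ commutes with $F$, this action is trivial. A direct computation, as in Proposition \ref{cocycle}, then shows that $c$ is a $2$-cocycle of $\Gal(L/K)$ with values in $F^\times$ with trivial action. Replacing $\eta_0(\sigma)$ by $\lambda(\sigma)\eta_0(\sigma)$ with $\lambda(\sigma)\in F^\times$ changes $c$ by a coboundary. By Weil's descent theorem up to isogeny (\cite{ribet_2004}, Theorem (8.2)), if the class $[c]\in H^2(\Gal(L/K),F^\times)$ vanishes, the $\eta_0$ can be modified into a genuine cocycle and $X$ descends up to isogeny to $K$, $F$-equivariantly.

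The key step is to kill $[c]$ after passing to a $(2,\dots,2)$-subextension, and it uses that $F$ is totally real. We will use a polarization, or Rosati-type degree map. Fix an $F$-linear polarization on $X$. The Rosati involution is then trivial on $F$, since $F$ is totally real. For each $\sigma$, $\eta_0(\sigma)\circ\eta_0(\sigma)^{\vee}$, suitably composed with the conjugated polarization, defines an element $d(\sigma)\in F^\times$. One checks that $c(\sigma,\tau)^2=d(\sigma)\,d(\tau)\,d(\sigma\tau)^{-1}$, so that $[c]$ is $2$-torsion, or more precisely that $c^2$ is an explicit coboundary. Following Ribet, write $F^\times\cong\{\pm1\}\times P$ with $P$ torsion-free, using the splitting through signs and a free part. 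The class $[c]$ then splits into a sign part in $H^2(G,\{\pm1\})$ and a part in $H^2(G,P)$. The $2$-torsion in $H^2(G,P)$ can be handled through $H^2(G,F^\times/\{\pm1\})$ and the fact that $P$ is a divisible-enough free abelian group. I expect this decomposition to be the main obstacle: one must justify that the image of $[c]$ in $H^2(G,F^\times/F^{\times2})$ comes from a homomorphism-type datum. Concretely, the map $\sigma\mapsto d(\sigma)\bmod F^{\times 2}$ should be a homomorphism $\Gal(L/K)\to F^\times/F^{\times2}$.

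Let $H$ be the kernel of $\sigma\mapsto d(\sigma)F^{\times2}$. Since $F^\times/F^{\times2}$ is an elementary abelian $2$-group, $K':=L^H$ is a $(2,\dots,2)$-extension of $K$ contained in $L$. Restricted to $H$, each $d(\sigma)$ is a square $\mu(\sigma)^2$. Rescaling $\eta_0(\sigma)$ by $\mu(\sigma)^{-1}$ makes $c^2$ trivial, so $c$ takes values in $\{\pm1\}$ on $H$. The remaining sign class must then be trivialized. Following Ribet, we would show that it is inflated from, or can be absorbed into, the same quadratic characters, possibly after enlarging $K'$ by further quadratic subextensions of $L$. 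This is where we would have to check Ribet's argument most carefully, to make sure it uses only total reality and not $[F:\mathbb{Q}]=g$. Once $[c|_H]=0$, Weil's theorem up to isogeny gives an abelian variety over $K'$ that is $F$-equivariantly isogenous to $X$ over $L$.
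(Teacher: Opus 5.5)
Your outline is the paper's: the $F^\times$-valued cocycle $c$ with the descent criterion of Lemma \ref{descente}, the degree map giving $c^2=\partial d$ as in Lemma \ref{square}, and then a purely cohomological step. What you call the main obstacle is in fact immediate. The identity $c(\sigma,\tau)^2=d(\sigma)d(\tau)/d(\sigma\tau)$ says exactly that $\sigma\mapsto d(\sigma)F^{\times2}$ is a homomorphism, and rescaling on its kernel $H$ does leave a $\{\pm1\}$-valued cocycle.

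\textbf{The gap.} The genuine gap is the final step: trivializing the class of $c|_H$ in $H^2(H,\{\pm1\})$ on a further multiquadratic subextension of $L$. You give no argument, and none can exist, because nothing ties this sign class to quadratic characters. Suppose $H=\Gal(L/K)=\langle\sigma\rangle\cong\mathbb{Z}/4$. Then $H^2(H,F^\times)\cong F^\times/F^{\times4}$, and $[c]$ corresponds to the class of $N=\eta_0(\sigma)\circ{}^{\sigma}\eta_0(\sigma)\circ{}^{\sigma^2}\eta_0(\sigma)\circ{}^{\sigma^3}\eta_0(\sigma)\in F^\times$. When $N=-1$ this class has order $2$. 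Its restriction to $\langle\sigma^2\rangle$, the only proper subgroup with $2$-elementary quotient, is the class of $-1$ in $F^\times/F^{\times2}$, which is nontrivial because $F$ is real.

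\textbf{This really happens.} Take the following data.
\begin{itemize}
\item $K=\mathbb{Q}$, and $M/\mathbb{Q}$ cyclic of degree $8$ (e.g.\ the real subfield of $\mathbb{Q}(\zeta_{17})$).
\item $L\subset M$ the quartic subfield, with $M=L(\sqrt{\alpha})$.
\item $X:\alpha y^2=f(x)$, the twist of a curve $E_0:y^2=f(x)$ over $\mathbb{Q}$ without CM.
\end{itemize}
Since $M/\mathbb{Q}$ is Galois, $\sigma(\alpha)=\alpha t^2$ with $t\in L$. So $(x,y)\mapsto(x,ty)$ is an isomorphism ${}^{\sigma}X\to X$, and the composite $N$ above is $(x,y)\mapsto(x,\mathrm{N}_{L/\mathbb{Q}}(t)\,y)$, i.e.\ $N=\mathrm{N}_{L/\mathbb{Q}}(t)=\pm1$. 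A lift $\tilde{\sigma}$ of $\sigma$ with $\tilde{\sigma}(\sqrt{\alpha})=t\sqrt{\alpha}$ satisfies $\tilde{\sigma}^4(\sqrt{\alpha})=\mathrm{N}_{L/\mathbb{Q}}(t)\sqrt{\alpha}$. Hence $N=-1$, precisely because $\Gal(M/\mathbb{Q})$ is cyclic.

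By Lemma \ref{descente}, $X$ is then not $L$-isogenous to a curve over $\mathbb{Q}$, nor over the quadratic subfield of $L$. These are the only multiquadratic subextensions of $L$, yet $X$ meets every hypothesis of the statement. So the statement, as formulated with $K'\subset L$ and an isogeny over $L$, cannot be proved.

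\textbf{How to repair it.} The sign class can only be killed after leaving $L$. Inflated to $\Gal(\bar{K}/K)$, it lands in $H^2(\Gal(\bar{K}/K),\{\pm1\})\simeq\mathrm{Br}(K)[2]$. Since $K$ is a global field, its elements are quaternion classes, each split by a quadratic extension of $K$. Combined with $L^H$, this gives a multiquadratic $K'$ that need not lie in $L$, together with an isogeny defined only over a larger field (in the example, $X\simeq E_0$ over $M$).

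The paper's proof does not fill this gap either. It invokes Ribet's Theorem (3.3) in a form stated for the finite group $\Gal(L/K)$ with $K'\subset L$, and in that form the $\mathbb{Z}/4$ example contradicts it.
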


This theorem gives the same result as Theorem (1.2) in \cite{ribet_1994}, so we are going to follow the exact same proof, namely the proofs of Propositions (3.1), (3.2) and (3.3) in his article that directly imply Theorem (1.2), and show that we do not require the field $F$ to be of degree $g$ over $\mathbb{Q}$. Some details will be omitted, see the original article of Ribet for the proof in its integrality. For the moment, $F$ can be an arbitrary number field ; write $\iota_X:F\tilde{\to}\End^0(X)$, resp. $\iota_{\sigmax}:F\tilde{\to}\End^0(\sigmax)$ for the inclusion (here an isomorphism) of $F$ in the endomorphism algebra of $X$, resp. $\sigmax$. Recall from Section \ref{definition_section} that the algebras $\End^0(X)$ and $\End^0(\sigmax)$ can be canonically identified, so we endow the field $F$ (and in particular $F^\times$) with a trivial action of the group $\Gal(L/K)$. This means that for $g\in\End^0(X)$, the elements $\iota_X^{-1}(g)$ and $\iota_{\sigmax}^{-1}({}^{\sigma}g)$ are equal in $F$. For $\sigma$, $\tau\in\Gal(L/K)$, the composition $\eta_0(\sigma)\circ {}^{\sigma}\eta_0(\tau)\circ\eta_0(\sigma\tau)^{-1}$ is a non-zero element of $\End^0(X)$ and thus of the form $\iota_X(c(\sigma,\tau))$ with $c(\sigma,\tau)\in F^\times$. This construction defines a map $(\sigma,\tau)\mapsto c(\sigma,\tau)$ and a quick computation shows that $c$ is a $2$-cocycle on $\Gal(L/K)$ with values in $F^\times$, seen as a trivial $\Gal(L/K)$-module. We write $c\in Z^2(\Gal(L/K),F^\times)$ and we denote its image in the cohomology group $H^2(\Gal(L/K),F^\times)$ by $\gamma$. If we make another choice for the maps $\eta_0(\sigma)$ and we replace $\eta_0(\sigma)$ by another $\eta_0'(\sigma)\in\Hom_L^0(\sigmax,X)$ for every $\sigma\in\Gal(L/K)$, then the corresponding cocycle $c'$ only differs from $c$ by a 2-coboundary with values in $F^\times$. Indeed, $\Hom_L^0(\sigmax,X)$ is a vector space of dimension one over $F$ by the proof of Proposition \ref{lemma}, which implies that $\eta_0(\sigma)$ and $\eta_0'(\sigma)$ only differ by an element of $F^\times$. So $c$ and $c'$ define the same cohomology class $\gamma$, thus $\gamma$ is well-defined.

    \begin{lem}\label{descente} Assume that $\End^0(X)=F$ is a field. For a tower of fields $K\subseteq K'\subseteq L$, the abelian variety $X$ is $F$-equivariantly isogenous over $L$ to an abelian variety $Y_{K'}$ defined over $K'$ if and only if $\gamma$ lies in the kernel of the restriction map $H^2(\Gal(L/K),F^\times)\to H^2(\Gal(L/K'),F^\times)$.\end{lem}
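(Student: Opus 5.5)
We prove both implications. Throughout, write $H=\Gal(L/K')$. The restriction of $\gamma$ to $H$ is the class of the cocycle $c_{\mid H\times H}$ built from the family $\{\eta_0(\sigma)\}_{\sigma\in H}$. By the well-definedness argument preceding the lemma, this class does not depend on the choice of the $\eta_0(\sigma)$. Both directions reduce to comparing that restricted class with the descent criterion up to isogeny (\cite{ribet_2004}, Theorem (8.2)) recalled in Section \ref{extension_section}.

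\emph{($\Rightarrow$)} Suppose $f:X\to Y:=Y_{K'}\times_{K'}L$ is an $F$-equivariant isogeny over $L$. For $\sigma\in H$ we identify ${}^{\sigma}Y$ with $Y$, and we set $\eta_0'(\sigma):=f^{-1}\circ{}^{\sigma}f\in\Hom_L^0(\sigmax,X)$.
\begin{itemize}
\item Since $f$ is $F$-equivariant and $F$ carries the trivial Galois action, ${}^{\sigma}f$ is also $F$-equivariant. Hence $\eta_0'(\sigma)$ commutes with $F$.
\item A direct computation gives $\eta_0'(\sigma)\circ{}^{\sigma}\eta_0'(\tau)=f^{-1}\circ{}^{\sigma}f\circ{}^{\sigma}f^{-1}\circ{}^{\sigma\tau}f=\eta_0'(\sigma\tau)$.
\end{itemize}
So the cocycle attached to $\{\eta_0'(\sigma)\}_{\sigma\in H}$ is identically $1$. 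By independence of choice, $\mathrm{res}(\gamma)=0$.

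\emph{($\Leftarrow$)} Suppose $\mathrm{res}(\gamma)=0$. Then there is a map $b:H\to F^\times$ with $c(\sigma,\tau)=b(\sigma)b(\tau)b(\sigma\tau)^{-1}$ for all $\sigma,\tau\in H$; the action on $F^\times$ is trivial, so the coboundary takes this form. Set $h(\sigma):=\iota_X(b(\sigma))^{-1}\circ\eta_0(\sigma)$ for $\sigma\in H$.
\begin{itemize}
\item The endomorphisms in $F$ are central in $\End^0(X)=F$ and commute with $\eta_0$.
\item Under the identification of $\End^0(X)$ with $\End^0(\sigmax)$, we have ${}^{\sigma}\iota_X(b)=\iota_{\sigmax}(b)$.
\end{itemize}
Using these two facts, $h(\sigma)\circ{}^{\sigma}h(\tau)=\iota_X\big(b(\sigma)^{-1}b(\tau)^{-1}c(\sigma,\tau)\big)\circ\eta_0(\sigma\tau)=h(\sigma\tau)$.

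Thus $\{h(\sigma)\}_{\sigma\in H}$ is a family of isomorphisms up to isogeny satisfying the cocycle condition for the Galois extension $L/K'$. By Ribet's Theorem (8.2), there exist $Y_{K'}$ over $K'$ and an isogeny $f:X\to Y:=Y_{K'}\times_{K'}L$ with $h(\sigma)=f^{-1}\circ{}^{\sigma}f$.

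It remains to show that $f$ can be taken $F$-equivariant; this is the main obstacle. We transport the $F$-action to $Y$ by $\iota_Y(a):=f\circ\iota_X(a)\circ f^{-1}\in\End^0(Y)$, making $f$ tautologically $F$-equivariant. The real issue is that $\iota_Y(a)$ should descend to $K'$, i.e.\ be fixed by $H$. Using ${}^{\sigma}f=f\circ h(\sigma)$ we compute
$${}^{\sigma}\iota_Y(a)={}^{\sigma}f\circ\iota_{\sigmax}(a)\circ{}^{\sigma}f^{-1}=f\circ h(\sigma)\circ\iota_{\sigmax}(a)\circ h(\sigma)^{-1}\circ f^{-1}=f\circ\iota_X(a)\circ f^{-1}=\iota_Y(a),$$
because $h(\sigma)$ commutes with $F$. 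Since every endomorphism of $X_{\bar L}$ is defined over $L$ and Galois descent applies to morphisms in the isogeny category, $\iota_Y(a)$ comes from $\End^0(Y_{K'})$. Hence $Y_{K'}$ carries an $F$-action over $K'$, and $f$ is an $F$-equivariant isogeny over $L$.
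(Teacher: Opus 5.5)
Your proof is correct. The forward direction is essentially the paper's: the paper first reduces to $K'=K$, which you bypass by working with $H=\Gal(L/K')$ directly.

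For the converse you take a genuinely different route. The paper builds $Y_K$ by hand inside the restriction of scalars $R_X=\mathrm{Res}_{L/K}X$. When $\gamma=1$ it identifies $\End^0_K(R_X)$ with the group algebra $F[\Gal(L/K)]=\bigoplus_\sigma F\cdot\eta_0(\sigma)$ and takes $Y_K$ to be the piece cut out by the direct factor $F$. It then uses $\Hom^0_L(Y,X)=\End^0_K(Y_K)=F$ to see that $Y$ is isogenous to a single conjugate, hence to $X$. In that approach the $F$-structure, indeed $\End^0_K(Y_K)=F$, comes out of the construction. You instead use the coboundary $b$ to turn the $\eta_0(\sigma)$ into an honest cocycle $h(\sigma)$. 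You then apply the descent-up-to-isogeny theorem recalled in Section 2 as a black box, and recover the $F$-action afterwards by Galois descent of the transported action $f\circ\iota_X(a)\circ f^{-1}$. Your version is more modular and avoids the sketchy computation of $\End^0_K(R_X)$. The price is that it rests on that black box, whose own proof is essentially the paper's restriction-of-scalars argument.

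Two small remarks. First, your invariance computation does not need the exact relation $h(\sigma)=f^{-1}\circ{}^{\sigma}f$. Any nonzero element of $\Hom^0_L(\sigmax,X)=F\cdot\eta_0(\sigma)$ commutes with $F$, because $\eta_0(\sigma)$ does and $F$ is commutative. So ${}^{\sigma}f=f\circ(f^{-1}\circ{}^{\sigma}f)$ works for any isogeny $f$. The same observation makes your first bullet in the forward direction automatic. Second, you get $\End^0_{K'}(Y_{K'})=F$ for free: it embeds into $\End^0_L(Y)\cong\End^0(X)=F$ and contains $\iota_Y(F)$.
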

        \begin{proof} This is Proposition $(3.1)$ in Ribet's article, where he asked for $X$ and $Y_{K'}$ to have real multiplication. We are going to show that the lemma is true even when $X$ and $Y_{K'}$ do not have real multiplication.

        We follow the proof of this proposition as written in Ribet's article. First notice that it is equivalent to prove that $X$ is $F$-equivariantly isogenous over $L$ to an abelian variety $Y_K$ defined over $K$ if and only if $\gamma=1$. If $X$ is isogenous to an abelian variety $Y_K$ defined over $K$, we choose an isogeny $g:X\to Y=Y_K\times_K L$  and set $\mu_0(\sigma):=g^{-1}\circ{}^{\sigma}g\in\Hom_L^0(\sigmax,X)$ for every $\sigma\in\Gal(L/K)$. It satisfies $\mu_0(\sigma)\circ {}^{\sigma}\mu_0(\tau)\circ\mu_0(\sigma\tau)^{-1}=1$ for every $\sigma$, $\tau\in\Gal(L/K)$, so the associated cohomology class $\gamma$ is trivial. The proof of the converse uses Weil's restriction of scalars of the variety $X$, whose construction does not depend on whether $X$ has real multiplication or not. Following the proof in Ribet's article, and writing $R_X:=\mathrm{Res}_{L/K}X$, we find that $\End^0_K(R_X)=F[\Gal(L/K)]=\oplus_\sigma F\cdot\eta_0(\sigma)$ if $\gamma=1$, without any assumption needed on the field $F$. The field $F$ is a direct factor of $\End^0_K(R_X)$ as algebras and we denote by $Y_K$ the subvariety of $R_X$ corresponding to this direct factor. By this we mean that $Y_K$ is the subvariety of $R_X$ whose endomorphism algebra $\End^0_K(Y_K)$ is the direct factor $F$ of $\End^0_K(R_X)$. Furthermore, we know that $R_{X,L}=R_X\times_K L$ is isomorphic to the product $\prod_{\sigma\in\Gal(L/K)}\sigmax$ because the extension $L|K$ is separable and as a consequence $Y=Y_K\times_K L$ is isogenous to a product of some of the Galois conjugates $\sigmax$. We have
        \begin{align*}
            \End^0_K(Y_K)=F=\Hom^0_K(Y_K,R_X)=\Hom^0_L(Y,X)
        \end{align*} by property of the restriction of scalars. But $F=\End^0(X)$ too so $Y$ is isogenous to exactly one Galois conjugate $\sigmax$. As a conclusion, the abelian variety $Y_K$ is defined over $K$ and $F$-equivariantly isogenous (over $L$) to $X$.\end{proof}

    From now on we require $F$ to be a totally real field, i.e. we only consider simple abelian varieties of Type I in Albert's classification. This excludes the case where $L$ is a finite field, because a simple abelian variety defined over a finite field is either of Type III or Type IV in Albert's classification (see \cite{oort_2008}, (6.1)). Let $Y$ be another abelian variety over $L$ that is $F$-equivariantly isogenous to each of its $\Gal(L/K)$-conjugates, with an isomorphism $\iota_Y:F\Tilde{\to}\End^0(Y)$. Ribet defines, for $\theta_X:X\to X^\vee$ (resp. $\theta_Y:Y\to Y^\vee$) a polarization of $X$ (resp. $Y$) as an abelian variety, and for $\mu\in\Hom_L^0(Y,X)$ any $F$-equivariant homomorphism up to isogeny, an endomorphism of $X$ up to isogeny $\mathrm{deg}(\mu)$ defined as
    \begin{align}
        \mathrm{deg}(\mu)=\mu\circ\theta_Y^{-1}\circ\mu^\vee\circ\theta_X\in\End^0(X).
    \end{align}
    We can identify $\mathrm{deg}(\mu)$ with an element of $F^\times$ via the isomorphism $\iota_X$. Note that $\deg(\mu)$ depends on the choice of the polarizations. First, if $\mu\in\End^0(X)$ is given by an element $c\in F$, then $\mathrm{deg}(\mu)=c^2$, because the Rosati involution given by the polarization $\theta_X$ is the identity, as $F$ is a totally real field. Then, if $\mu\in\Hom_L^0(Y,X)$ and $\lambda\in\Hom_L^0(Z,Y)$ are $F$-equivariant morphisms of abelian varieties up to isogeny, then $\mathrm{deg}(\mu\circ\lambda)=\mathrm{deg}(\mu)\cdot\mathrm{deg}(\lambda)$. This fact is easily proven with this diagram (not commutative):
    \begin{center} $\begin{CD}
    Z @> \lambda >> Y @> \mu >> X\\
    @V \theta_Z VV @V \theta_Y VV @VV \theta_X V \\
    Z^\vee @<< \lambda^\vee < Y^\vee @<< \mu^\vee < X^\vee
    \end{CD}$
    \end{center}
    by checking that
    \begin{align*}
        \iota_X(\mathrm{deg}(\mu\circ\lambda))&=\mu\circ\lambda\circ\theta_Z^{-1}\circ(\mu\circ\lambda)^\vee\circ\theta_X\\
        &=\mu\circ\lambda\circ\theta_Z^{-1}\circ\lambda^\vee\circ\mu^\vee\circ\theta_X\\
        &=\mu\circ\lambda\circ\theta_Z^{-1}\circ\lambda^\vee\circ\theta_Y\circ\theta_Y^{-1}\circ\mu^\vee\circ\theta_X\\
        &=\mu\circ\iota_Y(\mathrm{deg}(\lambda))\circ\theta_Y^{-1}\circ\mu^\vee\circ\theta_X\\
        &=\iota_X(\mathrm{deg}(\lambda))\circ\mu\circ\theta_Y^{-1}\circ\mu^\vee\circ\theta_X\\
        &=\iota_X(\mathrm{deg}(\lambda))\circ\iota_X(\mathrm{deg}(\mu))\\
        &=\iota_X(\mathrm{deg}(\mu)\cdot\mathrm{deg}(\lambda)).
    \end{align*}

    \begin{lem}\label{square} Assume that $\End^0(X)=F$ is a totally real number field. Then the order of the cohomology class $\gamma$ associated to $X$ is at most two.\end{lem}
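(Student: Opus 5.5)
The plan is to show that $c^2$ is a coboundary, using the multiplicativity of $\deg$ just established. Fix a polarization $\theta_X$ of $X$. For each $\sigma$, the conjugate ${}^{\sigma}\theta_X:\sigmax\to({}^{\sigma}X)^\vee={}^{\sigma}(X^\vee)$ is a polarization of $\sigmax$, and I will use these polarizations on the conjugates. Define
\begin{align*}
d(\sigma):=\deg(\eta_0(\sigma))=\eta_0(\sigma)\circ{}^{\sigma}\theta_X^{-1}\circ\eta_0(\sigma)^\vee\circ\theta_X\in F^\times .
\end{align*}
Apply $\deg$ to both sides of the defining relation $\iota_X(c(\sigma,\tau))\circ\eta_0(\sigma\tau)=\eta_0(\sigma)\circ{}^{\sigma}\eta_0(\tau)$. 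Since every map here is $F$-equivariant, multiplicativity together with $\deg(\iota_X(c))=c^2$ gives
\begin{align*}
c(\sigma,\tau)^2\, d(\sigma\tau)=d(\sigma)\cdot\deg\bigl({}^{\sigma}\eta_0(\tau)\bigr).
\end{align*}

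The key remaining step is to check that $\deg({}^{\sigma}\eta_0(\tau))=d(\tau)$. The degree of ${}^{\sigma}\eta_0(\tau):{}^{\sigma\tau}X\to\sigmax$ is computed with the polarizations ${}^{\sigma\tau}\theta_X$ and ${}^{\sigma}\theta_X$. It equals ${}^{\sigma}\bigl(\eta_0(\tau)\circ{}^{\tau}\theta_X^{-1}\circ\eta_0(\tau)^\vee\circ\theta_X\bigr)$, because conjugation commutes with composition, inverses and duals (${}^{\sigma}(g^\vee)=({}^{\sigma}g)^\vee$, since duality commutes with base change along $\mathrm{Spec}(\sigma)$). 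The result is ${}^{\sigma}\iota_X(d(\tau))=\iota_{\sigmax}(d(\tau))$, because the Galois action on $F$ is trivial under the canonical identification. Transporting this back to $\End^0(X)$ through the $F$-equivariant $\eta_0(\sigma)$ gives the same element of $F$, which justifies the identification of degrees inside the multiplicativity computation. I expect this to be the main obstacle: the identifications must be tracked carefully, in particular that conjugation respects duals and polarizations, and that multiplicativity is applied with consistent polarization choices on intermediate varieties.

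Granting that step, we get $c(\sigma,\tau)^2=d(\sigma)d(\tau)d(\sigma\tau)^{-1}$. This says that $c^2$ is the coboundary of the $1$-cochain $d$ for the trivial action, so $\gamma^2=1$ in $H^2(\Gal(L/K),F^\times)$. Hence the order of $\gamma$ is at most two.
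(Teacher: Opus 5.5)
Your proposal is correct and follows the paper's proof. You take degrees in the cocycle relation, use multiplicativity of $\deg$ together with $\deg(\iota_X(c))=c^2$, and conclude that $c^2$ is the coboundary of $\sigma\mapsto d(\sigma)$ for the trivial action. Your use of the conjugate polarizations ${}^{\sigma}\theta_X$ to justify $\deg({}^{\sigma}\eta_0(\tau))=d(\tau)$ spells out the step the paper compresses into ${}^{\sigma}d_\tau=d_\tau$.
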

        \begin{proof} This corresponds to Proposition (3.2) in Ribet's article.  We write $d_\sigma=\mathrm{deg}(\eta_0(\sigma))\in F^\times$ and we recall that $\gamma$ is the cohomology class of the $2$-cocycle $c$ on $\Gal(L/K)$ with values in $F^\times$, with $c(\sigma,\tau)=\eta_0(\sigma)\circ {}^{\sigma}\eta_0(\tau)\circ\eta_0(\sigma\tau)^{-1}$. By taking degrees on both sides of this equality and by using the two properties we stated on the degree, we get $c(\sigma,\tau)^2=\frac{d_\sigma {}^{\sigma}d_\tau}{d_{\sigma\tau}}=\frac{d_\sigma d_\tau}{d_{\sigma\tau}}$ (recall that $F^\times$ is endowed with a trivial action of $\Gal(L/K)$), thus $c^2$ is a coboundary on $\Gal(L/K)$ with values in $F^\times$. This means that $\gamma^2=1$ in the group $H^2(\Gal(L/K),F^\times)$. \end{proof}
    \begin{lem}
        Let $F$ be a totally real field. If $\gamma$ is an element of order $2$ in $H^2(\Gal(L/K),F^\times)$, then there exists a $(2,\cdots,2)$-extension $K'$ of $K$ contained in $L$ such that the image of $\gamma$ in $H^2(\Gal(L/K'),F^\times)$ is trivial.
    \end{lem}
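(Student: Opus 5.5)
This is Ribet's Proposition (3.3); I will reproduce his argument, which decomposes $F^\times$ as a trivial $G:=\Gal(L/K)$-module. Write $\mu(F)=\{\pm1\}$, which holds because $F$ is totally real. By Dirichlet's unit theorem and the structure of the ideal group, the group $F^\times$ is the product of $\{\pm 1\}$ and a free abelian group $P$, possibly of infinite rank. Indeed, $F^\times/\{\pm1\}$ is a subgroup of a free abelian group: map $F^\times$ to the fractional ideals, with kernel $\mathcal{O}_F^\times\cong\{\pm1\}\times\mathbb{Z}^{r}$, and note that subgroups of free abelian groups are free, so the extension splits. Hence
\[
H^2(G,F^\times)\cong H^2(G,\{\pm1\})\oplus H^2(G,P),
\]
and this decomposition is compatible with restriction to any subgroup $\Gal(L/K')$.

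The first step is to show that $H^2(G,P)$ has no element of order $2$, indeed no nonzero torsion at all. I will write $P=\bigoplus_{i} \mathbb{Z}$ with trivial action. Since cohomology of a finite group commutes with direct sums, it suffices to treat $H^2(G,\mathbb{Z})$. From $0\to\mathbb{Z}\to\mathbb{Q}\to\mathbb{Q}/\mathbb{Z}\to0$ we get $H^2(G,\mathbb{Z})\cong H^1(G,\mathbb{Q}/\mathbb{Z})=\Hom(G,\mathbb{Q}/\mathbb{Z})$. This is torsion, so the claim that it has no torsion is false and this approach fails as stated. I therefore switch to Ribet's actual argument, which works through characters.

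The corrected plan uses $\mathbb{Z}$-components. Let $\gamma=(\gamma_{\pm},\gamma_P)$. The component $\gamma_P$ lies in $\bigoplus_i H^2(G,\mathbb{Z})\cong\bigoplus_i\Hom(G,\mathbb{Q}/\mathbb{Z})$ and satisfies $2\gamma_P=0$. Each of its finitely many nonzero coordinates is then a character $\chi_i:G\to\frac12\mathbb{Z}/\mathbb{Z}$, and the kernel of $\chi_i$ cuts out a quadratic (or trivial) subextension of $L/K$. Restricting to the compositum $K_1$ of these finitely many fields kills $\gamma_P$, because the isomorphism $H^2(\cdot,\mathbb{Z})\cong\Hom(\cdot,\mathbb{Q}/\mathbb{Z})$ is compatible with restriction. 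The field $K_1$ is a $(2,\dots,2)$-extension of $K$ contained in $L$.

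The remaining component is $\gamma_{\pm}\in H^2(G,\{\pm1\})$, restricted to $G_1=\Gal(L/K_1)$. This is the main obstacle, since a class in $H^2(G_1,\mathbb{Z}/2)$ need not be killed by passing to a $(2,\dots,2)$-extension. My plan is to follow Ribet and use that the image of $\gamma_\pm$ in $H^2(G_1,F^\times)$ can also be modified by the $P$-part. The key exact sequence is $1\to\{\pm1\}\to F^\times\xrightarrow{x\mapsto x^2}F^{\times2}\to1$ together with the fact that $c^2$ is the coboundary of $d$, where $d_\sigma=\deg\eta_0(\sigma)$ as in Lemma \ref{square}. Concretely, I will consider the classes $d_\sigma\in F^\times/F^{\times2}$, which form a homomorphism $G\to F^\times/F^{\times2}$ because $d_{\sigma\tau}\equiv d_\sigma d_\tau$ modulo squares. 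Its kernel defines a $(2,\dots,2)$-extension $K'\subset L$. Over $K'$ every $d_\sigma$ is a square, $d_\sigma=e_\sigma^2$. Replacing $\eta_0(\sigma)$ by $e_\sigma^{-1}\eta_0(\sigma)$ makes all degrees equal to $1$, so the new cocycle satisfies $c(\sigma,\tau)^2=1$ and takes values in $\{\pm1\}$.

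This still leaves a class in $H^2(\Gal(L/K'),\{\pm1\})$. The hard step will be showing that this class is killed by a further quadratic-type extension. I expect this to require identifying it with the class of the extension $1\to\{\pm1\}\to\tilde G\to G\to1$ and exploiting that it comes from $F^\times$, where $-1$ is not a square but becomes a coboundary via the $P$-directions. If that fails, I would fall back on embedding $\{\pm1\}\hookrightarrow F^\times$ and using that the image of $H^2(\cdot,\{\pm1\})$ in $H^2(\cdot,F^\times)$ consists of classes split by characters into $F^\times/\{\pm1\}$, as in Ribet's Proposition (3.3).
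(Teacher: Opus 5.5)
You handle the $P$-component correctly. $F^\times=\{\pm1\}\times P$ with $P$ free, the $2$-torsion of $H^2(G,P)\cong\bigoplus_i\Hom(G,\mathbb{Q}/\mathbb{Z})$ comes from finitely many characters of order at most $2$, and restricting to the compositum of their fixed fields kills $\gamma_P$. Your second pass through $d_\sigma$ and $F^\times/F^{\times 2}$ only repeats this. It also brings the abelian variety into a purely cohomological statement; any cochain $d$ whose coboundary is $c^2$ would do. Over $K'$ it leaves you with a $\{\pm1\}$-valued cocycle, i.e.\ with the restriction of $\gamma_\pm$.

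You correctly flag $\gamma_\pm$ as the obstacle, but you never dispose of it, and your own first paragraph rules out both ideas you sketch. Since $\{\pm1\}$ is a direct summand of $F^\times$ as a trivial module, $H^2(H,\{\pm1\})\to H^2(H,F^\times)$ is injective for every subgroup $H$. So a nonzero sign class can never become trivial ``via the $P$-directions'', and characters into $F^\times/\{\pm1\}$ have no effect on it. Everything therefore hinges on killing a class of $H^2(\Gal(L/K),\{\pm1\})$ by a $(2,\dots,2)$-subextension of $L/K$.

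This cannot be done in general: the statement with $K'\subset L$ is false. The paper's one-line justification, that Ribet's argument is purely cohomological and carries over verbatim, does not hold up either. Let $\Gal(L/K)=\langle s\rangle\cong\mathbb{Z}/4\mathbb{Z}$ (e.g.\ $K=\mathbb{Q}$, $L=\mathbb{Q}(\zeta_5)$). Take the cocycle of the extension $\mathbb{Z}/8\mathbb{Z}\to\mathbb{Z}/4\mathbb{Z}$, with values in $\{\pm1\}\subset F^\times$:
\[
c(s^i,s^j)=\begin{cases}-1 & \text{if } i+j\geq 4,\\ 1 & \text{otherwise,}\end{cases}\qquad 0\leq i,j\leq 3 .
\]
Then $c^2=1$. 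The only $(2,\dots,2)$-extensions of $K$ inside $L$ are $K$ and the quadratic subfield $K_2$, with $\Gal(L/K_2)=\{1,s^2\}$. Under $H^2(\mathbb{Z}/2\mathbb{Z},F^\times)\cong F^\times/F^{\times 2}$ the restricted class is $c(s^2,s^2)=-1$, which is not a square in the totally real field $F$. Hence $\gamma=[c]$ has order exactly $2$ and survives every admissible $K'$.

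The missing input is arithmetic rather than cohomological. After inflating to $\Gal(\bar K/K)$, the sign part becomes a class in $H^2(\Gal(\bar K/K),\{\pm1\})\cong\mathrm{Br}(K)[2]$. That class is split by a $(2,\dots,2)$-extension of $K$, by Merkurjev's theorem, or by class field theory for global $K$. This is what makes the statement true over $\bar K$, as in Ribet's setting, but the splitting field need not lie in $L$.
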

    \begin{proof}
        This statement is Theorem (3.3) in Ribet's article. The proof is exactly the same as it only relies on abstract cohomological algebra, and does not depend anymore on the abelian variety $X$ nor on the degree of $F$.
    \end{proof}
    \begin{proof}[Proof of Theorem \ref{ribet}] Combining these three lemmas concludes the proof of the theorem. \end{proof}
Let $X$ be an abelian variety satisfying the condition $(*_\ell)$, with $\End^0(X)=F$ a totally real field. Then $X$ satisfies the conditions of Theorem \ref{ribet} and we have as a direct consequence:
\begin{cor}
    Assume that $X$ satisfies the condition $(*_\ell)$ for a prime $\ell\neq\mathrm{char}(L)$, and that $\End^0(X)=F$ is a totally real field. If the order of the group $\Gal(L/K)$ is odd, then the abelian variety $X$ is isogenous over $L$ to an abelian variety $X_K$ defined over $K$.
\end{cor}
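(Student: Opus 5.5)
The corollary follows from Theorem \ref{ribet}, but I would route the argument directly through the cohomology class $\gamma$ and Lemmas \ref{descente} and \ref{square}, since that gives descent all the way to $K$ and not merely to some $(2,\dots,2)$-extension.

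The first step is to produce the data required in Section 3. Since $X$ satisfies $(*_\ell)$ and $\End^0(X)=F$, Proposition \ref{lemma} gives, for every $\sigma\in\Gal(L/K)$, an isomorphism up to isogeny $\eta_0(\sigma)\in\Hom^0_L(\sigmax,X)$ that commutes with $F$. In particular $X$ is a $K$-abelian variety. This is exactly the setting in which the $2$-cocycle $c(\sigma,\tau)=\eta_0(\sigma)\circ{}^{\sigma}\eta_0(\tau)\circ\eta_0(\sigma\tau)^{-1}$ is defined, with values in $F^\times$ carrying the trivial action. Its class $\gamma\in H^2(\Gal(L/K),F^\times)$ is well defined.

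The second step is to show $\gamma=1$. Because $F$ is totally real, Lemma \ref{square} gives $\gamma^2=1$. On the other hand, $H^2(G,M)$ of a finite group $G$ is annihilated by $|G|$; this is the standard corestriction–restriction argument, $\mathrm{cor}\circ\mathrm{res}=|G|$ applied with the trivial subgroup. Writing $n=|\Gal(L/K)|$, we therefore have $\gamma^n=1$. Since $n$ is odd, $\gcd(2,n)=1$, and so $\gamma=1$.

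The third step is to conclude with Lemma \ref{descente} applied to $K'=K$. The restriction map is then the identity on $H^2(\Gal(L/K),F^\times)$, so $\gamma$ lies in its kernel. Hence $X$ is ($F$-equivariantly) isogenous over $L$ to an abelian variety $X_K$ defined over $K$.

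An alternative route goes through Theorem \ref{ribet} directly. It provides a $(2,\dots,2)$-extension $K'\subset L$, so $[K':K]$ is a power of $2$ that divides the odd number $n$, forcing $K'=K$.

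None of this is hard; the only points needing care are that Proposition \ref{lemma} really yields $F$-equivariant $\eta_0(\sigma)$, and the standard fact that $|G|$ kills $H^2(G,\cdot)$.
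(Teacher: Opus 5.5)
Your proof is correct, and your main route differs from the paper's. The paper's proof is exactly your "alternative route": it applies Theorem \ref{ribet} and notes that the only $(2,\dots,2)$-extension of $K$ inside $L$ is $K$ itself, since its degree is a power of $2$ dividing the odd number $|\Gal(L/K)|$.

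Your main argument instead opens up Theorem \ref{ribet} and keeps only two of its three ingredients. These are Lemma \ref{square}, giving $\gamma^2=1$, and Lemma \ref{descente} with $K'=K$. You replace the third ingredient with the elementary fact that $|G|$ annihilates $H^2(G,M)$, so $\gamma^n=1$ with $n$ odd forces $\gamma=1$. That third ingredient is Ribet's Theorem (3.3): a class of order $2$ in $H^2(\Gal(L/K),F^\times)$ becomes trivial on some $(2,\dots,2)$-subextension, and the paper only cites it.

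Your version is therefore more self-contained in the odd case. The paper's version is a one-line consequence once Theorem \ref{ribet} is available. Theorem \ref{ribet} also gives genuine information when $n$ is even, where your coprimality argument says nothing beyond $\gamma^2=1$.

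Both routes end in Lemma \ref{descente}, so both give an $F$-equivariant isogeny.
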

\begin{proof}
    If the order of $\Gal(L/K)$ is odd, the only $(2,...,2)$-extension of $K$ inside of $L$ is $K$ itself.
\end{proof}

\begin{rem}
    The preceding corollary shows that, if the order of the group $\Gal(L/K)$ is odd and if $\End^0(X)=F$ is a totally real field, then it is equivalent to extend the representation for one prime $\ell$ and for every prime $\ell$ different from the characteristic of $L$. Indeed, recall that, if $X$ descends to an abelian variety $X_K$ over $K$ (up to isogeny), then all of its associated $\ell$-adic representations are extended to the group $\Gal(\bar{L}/K)$ by using the $F$-equivariant isomorphisms up to isogeny from $\sigmax$ to $X$. Note that in this case, the $\ell$-adic representations of $\Gal(\bar{L}/K)$ on $V_\ell X_K$ are not necessarily isomorphic to the extended representations $\bar{\rho}_{\ell,X}$ for every prime $\ell$, because the morphisms $\eta_0(\sigma)$ do not necessarily satisfy the cocycle condition $\eta_0(\sigma\tau)=\eta_0(\sigma)\circ {}^{\sigma}\eta_0(\tau)$ for every $\sigma$, $\tau\in\Gal(L/K)$. However, they do satisfy this condition after multiplication by an element of $F^\times$, depending on $\sigma$.
\end{rem}

\section{Extension of the representation for all primes}\label{all_l}
Assume once again that $\End^0(X)=F$ is a totally real number field. If the abelian variety $X$ satisfies the condition $(*_\ell)$ for a prime $\ell\neq\mathrm{char}(L)$ and the extended representation commutes with $F$, section $\ref{extension_section}$ shows that there exist for every $\sigma$ in $\Gal(L/K)$ an isomorphism of Tate modules $\eta_\ell(\sigma):\vlsigmax\to\vlx$, an isomorphism of abelian varieties up to isogeny $\eta_0(\sigma)\in\Hom_L^0(\sigmax,X)$ and $\alpha_\ell(\sigma)\in F_\ell^\times$ such that $\eta_\ell(\sigma)=\alpha_\ell(\sigma)\cdot V_\ell(\eta_0(\sigma))$. From the equality
\begin{align*}
    \eta_\ell(\sigma\tau)=\eta_\ell(\sigma)\circ {}^{\sigma}\eta_\ell(\tau),
\end{align*}
proved in Proposition $\ref{cocycle}$, we obtain the following equality on the corresponding isogenies (the action of $\Gal(L/K)$ on $\Hom_{\Gal(\bar{L}/L)}(V_\ell \sigmax,V_\ell X)$ is $F_\ell$-linear, because it is $\mathbb{Q}_\ell$-linear and $F$ is seen as a trivial $\Gal(L/K)$-module):
\begin{align}\begin{split}
    \alpha_\ell(\sigma\tau)\cdot V_\ell(\eta_0(\sigma\tau))&=\alpha_\ell(\sigma)\cdot V_\ell(\eta_0(\sigma))\circ \alpha_\ell(\tau)\cdot V_\ell({}^{\sigma}\eta_0(\tau))\\
    &=\alpha_\ell(\sigma) \alpha_\ell(\tau)\cdot V_\ell(\eta_0(\sigma)\circ {}^{\sigma}\eta_0(\tau)).
\end{split}\end{align}
This implies the following equality, for every $\sigma$, $\tau\in\Gal(L/K)$:
\begin{align}\begin{split}
    \alpha_\ell(\sigma\tau)\cdot \eta_0(\sigma\tau)&=\alpha_\ell(\sigma) \alpha_\ell(\tau)\cdot \eta_0(\sigma)\circ {}^{\sigma}\eta_0(\tau)\\
    \frac{\alpha_\ell(\sigma\tau)}{\alpha_\ell(\sigma) \alpha_\ell(\tau)}&=\eta_0(\sigma)\circ {}^{\sigma}\eta_0(\tau)\circ \eta_0(\sigma\tau)^{-1}\\
    \frac{\alpha_\ell(\sigma\tau)}{\alpha_\ell(\sigma) \alpha_\ell(\tau)}&=c(\sigma,\tau).\end{split}
\end{align}
Thus $c$ is a $2$-coboundary of $\Gal(L/K)$ with values in $F_\ell^\times$ (we need to set $\mu_\ell(\sigma):=1/\alpha_\ell(\sigma)$ to properly have a coboundary), so its class in the cohomology group $H^2(\Gal(L/K),F_\ell^\times)$ is trivial. We can say more: $F_\ell$ is the product of the field completions $F_{\lambda_i}$ where $\lambda_i$, $i=1,...,r$, are the primes of $F$ lying above $\ell$. We can write
\begin{align}
    c(\sigma,\tau)=\frac{(\alpha_{\lambda_1}(\sigma\tau),\cdots,\alpha_{\lambda_r}(\sigma\tau))}{(\alpha_{\lambda_1}(\sigma),\cdots,\alpha_{\lambda_r}(\sigma))\cdot(\alpha_{\lambda_1}(\tau),\cdots,\alpha_{\lambda_r}(\tau))}
\end{align} with $\alpha_{\lambda_i}:\Gal(L/K)\to F_{\lambda_i}^\times$. Now $c(\sigma,\tau)$ is an element of $F^\times$ so all the fractions $\frac{\alpha_{\lambda_i}(\sigma\tau)}{\alpha_{\lambda_i}(\sigma)\alpha_{\lambda_i}(\tau)}$ are in $F^\times$ and are equal, for $i=1,\cdots,r$. Finally we have
\begin{align}
    c(\sigma,\tau)=\frac{\alpha_{\lambda_i}(\sigma\tau)}{\alpha_{\lambda_i}(\sigma)\alpha_{\lambda_i}(\tau)}\in B^2(\Gal(L/K),F_{\lambda_i}^\times)
\end{align}
for every prime $\lambda_i$ of $F$ dividing $\ell$.
\newline We would like to find a condition to determine if the abelian variety $X$ descends to $K$ up to isogeny. This is the case if and only if $\gamma=1$ in $H^2(\Gal(L/K),F^\times)$ by Lemma \ref{descente}, and this condition is met for example if there exists a prime $\lambda_i$ such that the $\alpha_{\lambda_i}(\sigma)$'s lie in $F^\times$ for every $\sigma\in\Gal(L/K)$, in which case $c$ is an element of $B^2(\Gal(L/K),F^\times)$. \textit{A priori} this is not the case ; however, the equality
\begin{equation}
\alpha_\ell(\sigma\tau)=c(\sigma,\tau)\alpha_\ell(\sigma)\alpha_\ell(\tau)
\end{equation}
holds for every prime $\ell$ such that $X$ satisfies the condition $(*_\ell)$. Indeed, if the abelian variety $X$ satisfies the condition $(*_{\ell'})$ for another prime $\ell'\neq\ell$, and if we consider the induced isomorphisms of Tate modules $\eta_{\ell'}(\sigma):V_{\ell'}{}^{\sigma}X\to V_{\ell'}X$, we have the same equality after replacing every $\alpha_\ell(\sigma)$ with $\alpha_{\ell'}(\sigma)\in F_{\ell'}^\times$ such that $\eta_{\ell'}(\sigma)=\alpha_{\ell'}(\sigma)\cdot V_{\ell'}(\eta_0(\sigma))$ because the isomorphisms of Tate modules $\eta_{\ell'}(\sigma)$ satisfy the cocycle condition $\eta_{\ell'}(\sigma\tau)=\eta_{\ell'}(\sigma)\circ {}^{\sigma}\eta_{\ell'}(\tau)$ too. Moreover $c(\sigma,\tau)\in F^\times$ is independent of $\ell$. If $X$ satisfies the condition $(*_\ell)$ for every prime $\ell$ different from the characteristic of $L$, we deduce that, for every prime $\lambda$ of $F$ not dividing the characteristic of $L$, there exists a map $\alpha_\lambda:\Gal(L/K)\to F_\lambda^\times$ such that
\begin{align}\label{eq_alpha}
    \alpha_\lambda(\sigma\tau)=c(\sigma,\tau)\alpha_\lambda(\sigma)\alpha_\lambda(\tau).
\end{align}
Fix an algebraic closure $\bar{F}$ of $F$. Let $n$, $s$ be natural integers, denote by $\zeta_{2^s}$ a primitive $2^s$-th root of unity in $\bar{F}$ and write $\eta_s=\zeta_{2^s}+\zeta_{2^s}^{-1}$. We say that $F$ satisfies the condition $(GW_n)$ if there exists an integer $s\geq 1$ such that $2^{s+1}$ divides $n$ and $F$ contains $\eta_s$ but $-1$, $2+\eta_{s}$ and $-(2+\eta_{s})$ are not squares in $F$, and if $-1$, $2+\eta_{s}$ and $-(2+\eta_{s})$ are squares in the completion $F_\lambda$ for every (2-adic) prime $\lambda$ of $F$. The theorem of Grunwald-Wang states that, if $F$ does not satisfy the (very restrictive) condition $(GW_n)$, then an element of $F$ that is a $n$-th power in every completion $F_\lambda$ of $F$, except for a finite set of primes $S$ that does not contain any prime dividing $2$, is a $n$-th power in $F$ (see \cite{artin_tate} Chapter X for example).

\begin{thm}\label{descente_complete}
    Let $n$ be the order of the group $\Gal(L/K)$, assume $\End^0(X)=F$ is a totally real field that does not satisfy the condition $(GW_n)$. If $X$ satisfies the condition $(*_\ell)$ for every prime $\ell$ different from $\mathrm{char}(L)$, except a finite set of primes $S$ that does not contain $2$, then $X$ is isogenous over $L$ to an abelian variety $X_K$ defined over $K$, with $\End^0_K(X_K)=F$.
\end{thm}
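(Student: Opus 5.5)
The plan is to show that the class $\gamma\in H^2(\Gal(L/K),F^\times)$ of the cocycle $c$ is trivial, and then to apply Lemma \ref{descente} with $K'=K$. The proof of that lemma already produces an abelian variety $X_K$ over $K$, $F$-equivariantly isogenous to $X$ over $L$, with $\End^0_K(X_K)=F$. By the discussion preceding the theorem, for every prime $\lambda$ of $F$ not lying over $\mathrm{char}(L)$ or over a prime of $S$, equation (\ref{eq_alpha}) says that the image of $\gamma$ in $H^2(\Gal(L/K),F_\lambda^\times)$ is trivial. Here $\Gal(L/K)$ acts trivially on $F^\times$ and on $F_\lambda^\times$. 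So the task is a local--global statement for $H^2$ of a finite group $G:=\Gal(L/K)$ with trivial coefficients.

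\textbf{Step 1: reduce to an Ext group.} For trivial coefficients I would use the universal coefficient sequence
\begin{align*}
0\to \mathrm{Ext}^1_{\mathbb{Z}}(G^{ab},A)\to H^2(G,A)\to \Hom(H_2(G,\mathbb{Z}),A)\to 0,
\end{align*}
which is natural in $A$. I apply it to $A=F^\times$ and to $A=F_\lambda^\times$, with the map induced by the inclusion $F^\times\hookrightarrow F_\lambda^\times$.

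Since $H_2(G,\mathbb{Z})$ is finite, $\Hom(H_2(G,\mathbb{Z}),F^\times)$ only sees the torsion $\{\pm1\}$ of $F^\times$. The map $\Hom(H_2(G,\mathbb{Z}),F^\times)\to\Hom(H_2(G,\mathbb{Z}),F_\lambda^\times)$ is injective because $F^\times\to F_\lambda^\times$ is injective. As $\gamma$ dies locally, its image in $\Hom(H_2(G,\mathbb{Z}),F^\times)$ is therefore zero. Hence $\gamma$ comes from $\mathrm{Ext}^1(G^{ab},F^\times)$.

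Write $G^{ab}\simeq\bigoplus_i \mathbb{Z}/n_i\mathbb{Z}$ with each $n_i$ dividing $n$. Then $\mathrm{Ext}^1(G^{ab},F^\times)\simeq\bigoplus_i F^\times/(F^\times)^{n_i}$, and similarly over $F_\lambda$. The left arrow of the universal coefficient sequence is injective for $A=F_\lambda^\times$ as well. So if $\gamma$ has components $(a_i)$ with $a_i\in F^\times$, then each $a_i$ is an $n_i$-th power in $F_\lambda$ for every admissible $\lambda$.

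\textbf{Step 2: apply Grunwald--Wang.} The excluded primes of $F$ are those above primes in $S$ and, in the function field case, those above $p=\mathrm{char}(L)$. This is a finite set. It contains no prime above $2$, because $2\notin S$ and $p\neq 2$; this is exactly where the hypothesis $\mathrm{char}(L)\neq 2$ enters.

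Next I check that $F$ does not satisfy $(GW_{n_i})$. The condition $(GW_m)$ only requires $2^{s+1}\mid m$, and $n_i\mid n$, so $(GW_{n_i})$ would imply $(GW_n)$, which is excluded by hypothesis. Grunwald--Wang then gives $a_i\in(F^\times)^{n_i}$ for each $i$. Hence $\gamma=1$, and Lemma \ref{descente} concludes.

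\textbf{Main obstacle.} The delicate point is Step 1. I need to make sure that the naturality of the universal coefficient sequence, together with the injectivity of $\mathrm{Ext}^1(G^{ab},F_\lambda^\times)\to H^2(G,F_\lambda^\times)$, really turns local triviality into the local $n_i$-th power condition on the $a_i$. A lazier route through $\gamma^2=1$ (Lemma \ref{square}) alone would not suffice.

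If the universal coefficient bookkeeping becomes awkward, I would instead work with explicit cocycles. I would inflate or restrict to cyclic subgroups and the abelianization, and there use that $H^2(\mathbb{Z}/m,A)=A/A^m$ for trivial $A$.
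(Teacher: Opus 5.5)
Your proposal is correct, but it takes a different route from the paper. The paper never decomposes $H^2$. It works with the explicit cochains $\alpha_\lambda$ and proceeds in three steps.
\begin{itemize}
\item Iterating (\ref{eq_alpha}) along the cyclic subgroup generated by each $\sigma$ gives $\alpha_\lambda(\sigma)^n=1/f_\sigma$, with $f_\sigma\in F^\times$ independent of $\lambda$.
\item Grunwald--Wang then gives $1/f_\sigma=g^n$ with $g\in F^\times$.
\item An extra lemma is needed to pass from ``$\alpha_\lambda(\sigma)^n$ is a global $n$-th power'' to ``$\alpha_{\lambda'}(\sigma)\in F^\times$''. The paper applies Chebotarev to $F(\zeta_n)/F$, choosing Frobenius equal to complex conjugation. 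This produces one prime $\lambda'\in\Sigma$ with $\mu_n(F_{\lambda'})=\mu_n(F)=\{\pm1\}$, so that $\alpha_{\lambda'}(\sigma)\in g\cdot\mu_n(F)\subset F^\times$ for all $\sigma$. Thus $\alpha_{\lambda'}$ is itself an $F^\times$-valued cochain trivializing $c$.
\end{itemize}
Your universal coefficient splitting, with $G=\Gal(L/K)$, replaces both the cyclic-subgroup computation and the Chebotarev lemma. The $\Hom(H_2(G,\mathbb{Z}),-)$ part is killed at a single local prime, by injectivity of $F^\times\to F_\lambda^\times$. The $\mathrm{Ext}^1(G^{ab},-)$ part is exactly a Grunwald--Wang statement for the elementary divisors $n_i$ of $G^{ab}$. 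Your reduction from $(GW_{n_i})$ to $(GW_n)$ is sound, since the condition depends on the exponent only through $2^{s+1}$ dividing it.

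The paper's route stays within elementary cocycle manipulations and produces an explicit trivializing cochain. It pays for this with the extra Chebotarev lemma, which uses that a totally real $F$ has only the roots of unity $\pm1$. Your route is more structural and yields more. Nothing in your Step 1 uses total reality of $F$. You only need $F$ to avoid $(GW_e)$, where $e$ is the exponent of $G^{ab}$, rather than $(GW_n)$. For example, no Grunwald--Wang hypothesis is needed at all when $G$ is perfect, or whenever $8\nmid e$.
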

Denote by $\Sigma$ the set of all primes $\lambda$ of $F$ that do not divide $\mathrm{char}(L)$ nor the primes contained in $S$. Before proving the theorem, we need the following lemma:
    \begin{lem}
    There exists a prime $\lambda'$ in $\Sigma$ such that the groups of $n$-th roots of unity $\mu_n(F)$ and $\mu_n(F_{\lambda'})$ are equal.
    \end{lem}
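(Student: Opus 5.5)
Since $F$ is totally real, $\mu_n(F)\subseteq\{\pm1\}$, so it suffices to find $\lambda'\in\Sigma$ such that $F_{\lambda'}$ contains no root of unity of order dividing $n$ other than $\pm1$. I will take $\lambda'$ to be a prime of degree one over $\mathbb{Q}$ (residue field $\mathbb{F}_p$) above a suitable rational prime $p$, chosen by Chebotarev so that $p\equiv 3 \pmod 4$ and $p\equiv 2\pmod q$ for each odd prime $q\mid n$.

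First, I reduce to a statement about residue fields. For $\lambda\mid p$ with $p\nmid n$, Hensel's lemma shows that the $n$-th roots of unity in $F_\lambda$ inject into the residue field $k_\lambda^\times$. The polynomial $x^n-1$ is separable mod $p$, so $\mu_n(F_\lambda)$ is isomorphic to $\mu_n(k_\lambda)$, a cyclic group of order $\gcd(n,|k_\lambda|-1)$. If $k_\lambda=\mathbb{F}_p$, this order is $\gcd(n,p-1)$, and I need it to be at most $2$. Moreover, it must equal $|\mu_n(F)|$: that is $2$ when $n$ is even and $1$ when $n$ is odd, since $-1\in F$ always and $-1$ is an $n$-th root of unity exactly when $n$ is even.

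Next, I choose $p$. By Dirichlet's theorem on the modulus $4\prod_{q\mid n,\,q \text{ odd}} q$, there are infinitely many primes $p$ with $p\equiv 3\pmod 4$ and $p\equiv 2 \pmod q$ for every odd prime $q\mid n$. For such $p$, $\gcd(n,p-1)$ divides $2$, and it equals $2$ exactly when $n$ is even, as required. I also need a degree-one prime of $F$ above $p$, and this is where the main obstacle lies: the congruence conditions must be compatible with $p$ splitting (at least partially) in $F$. The cleanest route is to apply Chebotarev to the compositum $M=\widetilde{F}\cdot\mathbb{Q}(\zeta_{4N})$, where $\widetilde{F}$ is the Galois closure of $F$ and $N=\prod_{q\mid n,\,q\text{ odd}} q$. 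I want a Frobenius that is trivial on $\widetilde{F}$ (so $p$ splits completely in $F$) and that restricts to the prescribed class $a\in(\mathbb{Z}/4N)^\times$ on $\mathbb{Q}(\zeta_{4N})$. Such an element exists if and only if $a$ is trivial on $\widetilde{F}\cap\mathbb{Q}(\zeta_{4N})$. Since $\widetilde{F}$ is totally real, this intersection lies in the maximal real subfield, which is fixed by complex conjugation $a=-1$. So I would rather aim for the class $a\equiv -1\pmod{4N}$, that is, $p\equiv -1 \pmod{4N}$. This still gives $p\equiv 3\pmod 4$ and $p\equiv -1\not\equiv 1\pmod q$, hence $\gcd(n,p-1)\mid 2$. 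So Chebotarev yields infinitely many $p$ that split completely in $F$ and satisfy $p\equiv-1\pmod{4N}$.

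Finally, since there are infinitely many such $p$, I can pick one avoiding $\mathrm{char}(L)$, the finite set $S$, and the primes dividing $n$. Any prime $\lambda'$ of $F$ above it then lies in $\Sigma$, has $F_{\lambda'}=\mathbb{Q}_p$, and satisfies $\mu_n(F_{\lambda'})=\mu_{\gcd(n,p-1)}=\mu_n(F)$, where the last equality uses $\mu_n(F)\subseteq\{\pm1\}$ and the parity computation above.
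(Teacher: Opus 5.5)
Your proof is correct, but it is organized differently from the paper's. Both arguments rest on the same idea: apply Chebotarev with complex conjugation as the Frobenius class, which is possible because $F$ is totally real.

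\textbf{The paper's route.} It applies Chebotarev directly to the abelian extension $F(\zeta_n)/F$. For infinitely many $\lambda$ the Frobenius is complex conjugation, which acts on $\mu_n$ by $\zeta\mapsto\zeta^{-1}$. The only $n$-th roots of unity fixed by the decomposition group, i.e.\ lying in $F_\lambda$, are therefore those in $\{\pm1\}$.

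\textbf{Your route.} You work over $\mathbb{Q}$ in $M=\widetilde F(\zeta_{4N})$. Your element $(1,-1)$ is just complex conjugation on $M$, and it is central in $\Gal(M/\mathbb{Q})$. Making it the Frobenius forces $p$ to split completely in $F$ and to satisfy $p\equiv-1\pmod{4N}$. You then compute $|\mu_n(F_{\lambda'})|=|\mu_n(\mathbb{Q}_p)|=\gcd(n,p-1)$ explicitly via Hensel.

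\textbf{Comparison.} The paper's proof is shorter and stays over $F$. It leaves implicit the identification of $\mu_n(F_\lambda)$ with the Frobenius-fixed roots of unity, which holds for $\lambda\nmid n$. Yours costs the Galois closure and the compositum. In exchange it turns the local question into an elementary gcd computation and gives the slightly stronger conclusion that $\lambda'$ can be chosen with $F_{\lambda'}=\mathbb{Q}_p$. A simpler modulus would also do: $p\equiv-1\pmod n$ already gives $\gcd(n,p-1)=\gcd(n,2)$.

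\textbf{Clean-up.} In a final write-up, delete your first plan ($p\equiv 2\pmod q$ via Dirichlet) rather than leaving it as a superseded step, because it genuinely fails in general. For example, if $5\mid n$ and $\mathbb{Q}(\sqrt5)\subseteq F$, then a prime $p\equiv 2\pmod 5$ is inert in $\mathbb{Q}(\sqrt5)$. It therefore has no degree-one prime of $F$ above it. This is exactly why the switch to the class $-1$, which is compatible with splitting in a totally real field, is needed.
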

    \begin{proof}
        If $n=2$, this equality obviously holds for every prime $\lambda$. Otherwise, let $\zeta_n$ be a primitive $n$-th root of unity in $\bar{F}$. The extension $F(\zeta_n)\mid F$ is Galois, its Galois group is abelian and denoted by $H\subset (\mathbb{Z}/n\mathbb{Z})^\times$. If $\lambda$ is a place of $F$ and $\Lambda$ an unramified place of $F(\zeta_n)$ above $\lambda$, denote by $\mathrm{Frob}_\lambda$ the associated Frobenius element in the decomposition group $\Gal(F(\zeta_n)_\Lambda/F_\lambda)$. We required $F$ to be a totally real field, so the only roots of unity in $F$ are $1$ and $-1$. The complex conjugation in $H$ acts non-trivially on every root of unity in $F(\zeta_n)$ different from $1$ and $-1$. Chebotarev's density Theorem implies that there exist infinitely many primes of $F$ whose associated Frobenius is conjugated to the complex conjugation in $H$, and thus acts non-trivially on every $n$-th root of unity that is not in $F$. This proves that there exists a prime $\lambda'$ in $\Sigma$ such that $\mu_n(F)=\mu_n(F_{\lambda'})$
    \end{proof}

\begin{proof}[Proof of Theorem \ref{descente_complete}]
    Let $\sigma\in\Gal(L/K)$. From (\ref{eq_alpha}), we have the following equality for any $m\in\mathbb{N}$, for any prime $\lambda$ in $\Sigma$:
    \begin{align*}
        \alpha_\lambda(\sigma^m)=\alpha_\lambda(\sigma\cdot\sigma^{m-1})=c(\sigma,\sigma^{m-1})\alpha_\lambda(\sigma)\alpha_\lambda(\sigma^{m-1}).
    \end{align*} Thus, by induction:
    \begin{align*}
        \alpha_\lambda(\sigma^m)=c(\sigma,\sigma^{m-1})c(\sigma,\sigma^{m-2})\cdots c(\sigma,\sigma)\alpha_\lambda(\sigma)^m.
    \end{align*} The product $c(\sigma,\sigma^{m-1})c(\sigma,\sigma^{m-2})\cdots c(\sigma,\sigma)$ is a non-zero element of $F=\End^0(X)$ and is independent of the prime $\lambda$. For $m$ equal to $n$, the order of the group $\Gal(L/K)$, we have the equality $\alpha_\lambda(\sigma^n)=\alpha_\lambda(\mathrm{id})=1=f\cdot\alpha_\lambda(\sigma)^n$ with $f=c(\sigma,\sigma^{n-1})c(\sigma,\sigma^{n-2})\cdots c(\sigma,\sigma)\in F$, that is $\alpha_\lambda(\sigma)^n=1/f\in F^\times$ for all primes $\lambda$ in $\Sigma$. The number $1/f$ is a $n$-th power in the completion $F_\lambda$ for every $\lambda$ in $\Sigma$ so it is a $n$-th power in $F$ too by the Theorem of Grunwald-Wang.

    Denote by $g$ an element of $F^\times$ such that $g^n=1/f$. We want to show that there exists a prime $\lambda$ such that $\alpha_\lambda(\sigma)$ lies in $F^\times$. The set of elements $s$ of $F$ such that $s^n=1/f$ is equal to $\{\zeta\cdot g,\zeta\in\mu_n(F)\}$ and the set of elements $s$ of $F_\lambda$ such that $s^n=1/f$ is equal to $\{\zeta\cdot g,\zeta\in\mu_n(F_\lambda)\}$. The previous lemma shows that there exists a prime $\lambda'$ in $\Sigma$ such that $\mu_n(F)=\mu_n(F_{\lambda'})$, and thus that $\alpha_{\lambda'}(\sigma)$ lies in $F^\times$. The choice of the prime $\lambda'$ does not depend on $\sigma\in\Gal(L/K)$, so $\alpha_{\lambda'}(\sigma)$ lies in $F^\times$ for every $\sigma\in\Gal(L/K)$. As a consequence, $c(\sigma,\tau)=\frac{\alpha_{\lambda'}(\sigma\tau)}{\alpha_{\lambda'}(\sigma)\alpha_{\lambda'}(\tau)}$ is a $2$-coboundary with coefficients in $F^\times$, so $\gamma$ is the trivial class in $H^2(\Gal(L/K),F^\times)$ and $X$ is $F$-equivariantly isogenous to an abelian variety $X_K$ defined over $K$ with $\End^0_K(X_K)=F$, by Lemma \ref{descente}.
\end{proof}

\begin{eg}
The field $F=\mathbb{Q}$ does not satisfy the condition $(GW_n)$ for any integer $n$, so the preceding theorem is true for any Galois extension $L/K$ when the endomorphisms of $X$ are trivial.
\newline The preceding theorem also holds for every Galois extension $L$ of $K$ such that $8$ does not divide the order of the group $\Gal(L/K)$, independently of the totally real field $F$.
\end{eg}

We cannot expect the descent to be an isomorphism in this case, because there exists \emph{a priori} no genuine isomorphism from $\sigmax$ to $X$. However, we can get a better result if $X$ satisfies the condition $(**_\ell)$ for every prime $\ell$. Assume that this is the case: by Proposition \ref{isom}, if $\End(X)=\mathbb{Z}$ and $L$ is a number field, we have for every $\sigma\in \Gal(L/K)$  an isomorphism $\eta_0(\sigma):\sigmax\tilde{\to} X$ over $L$, and for every prime $\ell$ an $\ell$-adic integer $\alpha_\ell(\sigma)\in\mathbb{Z}_\ell$ such that $\eta_\ell(\sigma)=\alpha_\ell(\sigma)\cdot T_\ell(\eta_0(\sigma))$. We have now for every prime $\ell$ the equality
\begin{align*}
    \alpha_\ell(\sigma\tau)=c(\sigma,\tau)\alpha_\ell(\sigma)\alpha_\ell(\tau),
\end{align*}
with $c(\sigma,\tau)\in\{\pm 1\}$ an automorphism of $X$ ; the $2$-cocycle $c\in Z^2(\Gal(L/K),\{\pm 1\})$ defines a class $\gamma$ in the cohomology group $H^2(\Gal(L/K),\{\pm 1\})$.
\begin{thm}
    If $\End(X)=\mathbb{Z}$, if $L$ is a number field and if $X$ satisfies the condition $(**_\ell)$ for every prime number $\ell$, then the abelian variety $X$ is isomorphic over $L$ to an abelian variety $X_K$ defined over $K$, with $\End^0_K(X_K)=F$.
\end{thm}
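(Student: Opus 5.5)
The plan is to build a genuine Weil descent datum out of the isomorphisms $\eta_0(\sigma)$ given by Proposition \ref{isom}. By that proposition, for every $\sigma\in\Gal(L/K)$ we may take $\eta_0(\sigma):\sigmax\tilde{\to}X$ to be an isomorphism over $L$, and we normalise $\eta_0(\mathrm{id})=\mathrm{id}_X$. For every prime $\ell$ we have $\eta_\ell(\sigma)=\alpha_\ell(\sigma)\cdot T_\ell(\eta_0(\sigma))$ with $\alpha_\ell(\sigma)\in\mathbb{Z}_\ell$. Since $\eta_\ell(\sigma)$ and $T_\ell(\eta_0(\sigma))$ are both isomorphisms of $\mathbb{Z}_\ell$-modules $\tlsigmax\to\tlx$, the scalar $\alpha_\ell(\sigma)$ is in fact a unit of $\mathbb{Z}_\ell$. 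The cocycle $c(\sigma,\tau)=\eta_0(\sigma)\circ{}^{\sigma}\eta_0(\tau)\circ\eta_0(\sigma\tau)^{-1}$ is an automorphism of $X$ lying in $\End(X)=\mathbb{Z}$, hence $c(\sigma,\tau)\in\{\pm1\}$. As in Section \ref{all_l}, the relation $\alpha_\ell(\sigma\tau)=c(\sigma,\tau)\alpha_\ell(\sigma)\alpha_\ell(\tau)$ holds in $\mathbb{Z}_\ell^\times$.

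The key step is to force some $\alpha_\ell(\sigma)$ into $\{\pm1\}$. Exactly as in the proof of Theorem \ref{descente_complete}, iterating the relation gives $\alpha_\ell(\sigma)^n=1/f$, where $n=|\Gal(L/K)|$ and $f=\prod_{k=1}^{n-1}c(\sigma,\sigma^k)\in\{\pm1\}$. Hence $\alpha_\ell(\sigma)^{2n}=1$, so $\alpha_\ell(\sigma)$ is a root of unity in $\mathbb{Z}_\ell^\times$. The choice that should avoid any Chebotarev or Grunwald--Wang argument is $\ell=2$: condition $(**_2)$ is among our hypotheses, and the only roots of unity in $\mathbb{Q}_2$ are $\pm1$. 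So $\alpha_2(\sigma)\in\{\pm1\}$ for every $\sigma$, with no dependence on $n$. If one preferred to avoid $\ell=2$, the fallback would be a prime $\ell\equiv-1\pmod{2n}$, which exists by Dirichlet's theorem and satisfies $\mu_{2n}(\mathbb{Q}_\ell)=\{\pm1\}$.

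Then set $h(\sigma):=\alpha_2(\sigma)\cdot\eta_0(\sigma)$. This is still an isomorphism $\sigmax\tilde{\to}X$ over $L$, because $\alpha_2(\sigma)=\pm1$ is an automorphism. We check the cocycle condition using the definition of $c$ and the relation for $\alpha_2$:
\begin{align*}
h(\sigma)\circ{}^{\sigma}h(\tau)=\alpha_2(\sigma)\alpha_2(\tau)\,c(\sigma,\tau)\,\eta_0(\sigma\tau)=\alpha_2(\sigma\tau)\,\eta_0(\sigma\tau)=h(\sigma\tau).
\end{align*}
Here ${}^{\sigma}$ commutes with multiplication by the integer $\alpha_2(\tau)$. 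Equivalently, one can read off $T_2h(\sigma)=\eta_2(\sigma)$ and use Proposition \ref{cocycle} together with the injectivity of $g\mapsto T_2g$.

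Finally, the isomorphisms $h(\sigma)$ form an $L/K$-descent datum, so Weil's theorem \cite{weil_1956} (abelian varieties being projective) yields $X_K$ over $K$ with $X\simeq X_K\times_K L$ over $L$. For the endomorphisms, $\End^0_K(X_K)\subseteq\End^0_L(X)=\mathbb{Q}=F$, and it contains $\mathbb{Q}$, hence equals $F$. The only delicate points I expect are checking that $\alpha_2(\sigma)$ really is a $2$-adic unit, which follows from invertibility of both maps, and checking that the relation for $\alpha_\ell$ holds integrally rather than merely up to isogeny; both follow from the arguments already given in Section \ref{all_l} and Remark \ref{Sur_Z}.
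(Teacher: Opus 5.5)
Your proposal is correct and follows the paper's own proof. You iterate the relation to get $\alpha_\ell(\sigma)^{2n}=1$, take $\ell=2$ where the only roots of unity are $\pm1$, and twist $\eta_0(\sigma)$ by $\alpha_2(\sigma)$ to obtain a genuine cocycle for Weil's theorem; your explicit $h(\sigma)=\alpha_2(\sigma)\,\eta_0(\sigma)$ simply spells out the paper's ``multiplication by a suitable coboundary''.
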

\begin{proof}
    The proof follows the strategy of Theorem \ref{descente_complete}. Let $\sigma\in\Gal(L/K)$. For every $m\in\mathbb{N}$ we have the following equality:
    \begin{align*}
        \alpha_\ell(\sigma^m)&=c(\sigma,\sigma^{m-1})c(\sigma,\sigma^{m-2})\cdots c(\sigma,\sigma)\alpha_\ell(\sigma)^m\end{align*} and thus, for $m=n$ the order of the group $\Gal(L/K)$,
        \begin{align*}\alpha_\ell(\mathrm{id})=1=q\cdot \alpha_\ell(\sigma)^n
    \end{align*}
    with $q=\pm 1$. We see that $\alpha_\ell(\sigma)$ is a $2n$-th root of unity in $\mathbb{Z}_\ell$. For $\ell=2$, the group of roots of unity in $\mathbb{Z}_\ell$ is the same as the group of roots of unity in $\mathbb{Z}$. This implies that $\alpha_2(\sigma)\in\{\pm 1\}$ for every $\sigma\in\Gal(L/K)$ and thus that $\gamma$ is trivial in $H^2(\Gal(L/K),\{\pm 1\})$.
    \newline Now, after multiplication by a suitable coboundary of $\Gal(L/K)$ with values in $\{\pm 1\}=\Aut(X)$, the isomorphisms $\eta_0(\sigma)$ satisfy the cocycle condition $\eta_0(\sigma\tau)=\eta_0(\sigma)\circ {}^{\sigma}\eta_0(\tau)$ for every $\sigma$, $\tau\in\Gal(L/K)$. Thus $X$ is isomorphic over $L$ to an abelian variety defined over $K$. 
\end{proof}

\section*{Acknowledgements}
I would like to thank Rutger Noot for guiding me during the writing of this article, and for correcting my mistakes. I also thank Thomas Agugliaro for our mathematical discussions.

\printbibliography[title={References}]

\end{document}